\documentclass[12pt,a4paper]{article}
\usepackage{amsmath,amsthm,amssymb, mathrsfs}
\usepackage{hyperref}
\usepackage{tikz}
\input xy
\xyoption{all}
\usepackage{epsfig}

\oddsidemargin 0.1875 in \evensidemargin 0.1875in
\textwidth 6 in 
\textheight 230mm \voffset=-4mm 

\newtheorem{thm}{Theorem}[section]

\newtheorem{lem}[thm]{Lemma}

 \theoremstyle{definition}
\newtheorem{defin}[thm]{Definition}

\theoremstyle{remark}
\newtheorem{remark}[thm]{Remark}

\numberwithin{equation}{section}



\newcommand{\ben}{\begin{enumerate}}

\newcommand{\een}{\end{enumerate}}
\newcommand{\bit}{\begin{itemize}}
\newcommand{\eit}{\end{itemize}}






\begin{document}

\title
{Recovering Finite Parametric Distributions and Functions Using the Spherical Mean Transform}

\vskip 1cm
\author{Yehonatan Salman \\ Email: salman.yehonatan@gmail.com\\ Weizmann Institute of Science}
\date{}

\maketitle

\begin{abstract}

The aim of the article is to recover a certain type of finite parametric distributions and functions using their spherical mean transform which is given on a certain family of spheres whose centers belong to a finite set $\Gamma$. For this, we show how the problem of reconstruction can be converted to a Prony's type system of equations whose regularity is guaranteed by the assumption that the points in the set $\Gamma$ are in general position. By solving the corresponding Prony's system we can extract the set of parameters which define the corresponding function or distribution.

\end{abstract}

\section{Introduction and Motivation}

The aim of the article is to recover signals $f$ of the form
\begin{equation}
f(x) = \sum_{k = 1}^{m}a_{k}g(|x - x_{k}|), x_{k}\in\Bbb R^{n}, a_{k}\in\Bbb R\setminus\{0\}, 1\leq k\leq m,
\end{equation}
from the spherical mean transform (SMT for short) which integrates functions on spheres with a given set of centers and corresponding radii. Here, $g$ is a given scalar function defined on $\Bbb R^{+} = [0,\infty)$ and $m$ is a given fixed positive integer. We will also be interested in recovering signals $f$ of the form
\begin{equation}
f = \sum_{k = 1}^{m}a_{k}\delta_{x_{k}}, x_{k}\in\Bbb R^{n}, a_{k} \in\Bbb R\setminus\{0\}, 1\leq k\leq m,
\end{equation}
where $\delta_{x_{k}}$ is the shifted delta function to the point $x_{k}$, i.e., $\delta_{x_{k}}(x) = \delta(x - x_{k})$. The dual case, where $f$ is a sum of delta functions supported on hyperplanes rather than on points, will also be investigated.

Signals of the form (1.1)-(1.2) appear in many scientific fields such as in Signal Processing and Bioimaging (\cite{5, 6, 9, 20, 21}) and in other various mathematical fields such as in Inverse Problems and Approximation Theory (\cite{14, 28}).

Reconstruction of functions from their SMT is a well-known problem that has been investigated by many authors (\cite{1, 4, 10, 11, 15, 16, 18, 22, 23, 25, 26, 30}). The general problem can be described as finding a formula, or more generally an algorithm, for the reconstruction of a general function $f$, defined on $\Bbb R^{n}$, via its SMT.

Each sphere in $\Bbb R^{n}$ is determined by its center point $x\in\Bbb R^{n}$ and radius $r\geq0$ and thus the set of all spheres in $\Bbb R^{n}$ is $n + 1$ dimensional. Hence, the problem of reconstructing $f$ from its SMT is overdetermined since the space $\Bbb R^{n}$, on which $f$ is defined, is $n$ dimensional. Thus, in order to obtain a well posed problem one has to restrict the domain of definition of the SMT. In most cases it is assumed that the SMT is restricted to a set of the form $\Gamma\times\Bbb R^{+}$, where $\Gamma$ is a hypersurface in $\Bbb R^{n}$. That is, the centers of the spheres of integration are assumed to belong to a hypersurface while no restriction is imposed on the radii.

The problem of reconstructing a function $f$ from its SMT, restricted to such family of sets, arises in many practical fields such as thermo and photoacoustic tomography, radar and sonar imaging and approximation theory (\cite{2, 7, 19, 24, 27}). In the last few decades the reconstruction problem was solved in many cases such as where $\Gamma$ is a plane (\cite{4, 22}), a quadratic hypersurface (ellipsoid, paraboloid or a hyperboloid) (\cite{1, 10, 11, 15, 16, 18, 25}) or a cylinder (\cite{16,30}).
In all of the above obtained results there are no prior assumptions on the functions in question to be recovered other than some smoothness and support conditions. However, since each member in our family of functions has the form (1.1) it follows that it depends only on a finite set of parameters. Thus, we will have to shrink the set $\Gamma$ to a discrete set in order to obtain a well posed problem.

\begin{remark}

Observe that for a discrete set $\Gamma$, the set $\Gamma\times\Bbb R^{+}$ is one dimensional while each function $f$ of the form (1.1) depends only on a finite set of parameters (i.e., on a set of dimension $0$). However, one should observe that we cannot restrict the set of radii to be also discrete since otherwise the ability of reconstructing the function $f$ will depend on the function $g$. Indeed, if the set of radii is also discrete then the set of spheres on which the SMT is defined is at most countable. In this case it is not hard to choose a function $g$, with sufficiently small support near the origin, such that non of the spheres of integration will intersect the support of $f$. Thus, reconstruction of $f$ will be impossible in this case.

\end{remark}

The reconstruction problem of signals of the form (1.1)-(1.2) from the SMT arises in cases where one would like to recover point-wise signals with individual masses distributed in a relatively homogenous medium and where the data is collected from transducers scattered near the signals in question.

The main results of the article assert that if the set $\Gamma$ of centers of the spheres of integration consists of sufficiently many points in general position, then one can reconstruct signals $f$ of the form (1.1)-(1.2). The main idea behind these results is that the problem of reconstruction can be converted to the problem of solving a nonlinear Prony's type system of equations. The assumption that the points in the set $\Gamma$ are in general position guarantees the regularity of the obtained Prony's systems. We will also assume that the amplitudes $a_{1},...,a_{m}$ of $f$ are mutually distinct so that the solutions of the obtained Prony's systems can be used the get information on the distances between the points in $\Gamma$ and the translations $x_{1},...,x_{m}$. Using this information we can extract the points $x_{1},...,x_{m}$ and the amplitudes $a_{1},...,a_{m}$ which define the signals $f$.

\begin{remark}

Reconstructing signals of the form (1.1)-(1.2) using Prony's systems of equations is a known method that has been used, for example, in \cite{3, 5, 29}. There, the data for a signal in the form (1.1) or (1.2) was collected from a discrete set of values of its Fourier transform which results in a set of integral moments that can be converted to a Prony's type system of equations. Since we are dealing with the SMT rather than with the Fourier transform, our method should be modified accordingly.

\end{remark}

We will start by reconstructing signals $f$ of the form (1.2) and thus we will first define the SMT generally for distributions (in particular, this will define the SMT in the case where the signal $f$ is a function of the form (1.1)). Then, we will show that the obtained reconstruction procedure can be slightly modified for recovering general signals $f$ of the form (1.1). We will close our discussion by considering the case where the amplitudes $a_{i},, i = 1,...,m$ may collide and how the main procedure should probably be modified to include also this case, and give a numerical example to illustrate the main results in the text. Finding a reconstruction formula in the general case where the amplitudes may collide is left for future research.

\section{Mathematical Background}

Denote by $\Bbb R^{n}$ the standard $n$ dimensional Euclidean space, by $\Bbb S^{n - 1}$ the unit sphere in $\Bbb R^{n}$ and by $\Bbb R^{+}$ the ray $[0,\infty)$.

Denote by $C(\Bbb R^{n})$ the set of continuous real functions, defined on $\Bbb R^{n}$, with the inner product
$$\langle f,g\rangle_{\Bbb R^{n}} = \int_{\Bbb R^{n}}f(x)g(x)dx$$
in case where the integral converges. In the same way we define the set $C(\Bbb R^{+})$ with its inner product $\langle\hskip0.1cm,\hskip0.1cm \rangle_{\Bbb R^{+}}$.

For $x_{0}\in\Bbb R^{n}, \theta\in\Bbb S^{n - 1}$ and $\rho > 0$ define the following distributions on $C(\Bbb R^{n})$:

$$\delta_{x_{0}}(f) = f(x_{0}), \delta_{(\theta, \rho)}(f) = \int_{\langle x,\theta\rangle = \rho}f(x)dm_{x}$$
where $\langle\hskip0.1cm,\hskip0.1cm \rangle$ denotes the usual scalar product on $\Bbb R^{n}$.

\begin{defin}
For a given point $x\in\Bbb R^{n}$, the spherical mean transform (SMT for short) at the point $x$ is defined to be the following distribution
$$R_{x}:C\left(\Bbb R^{n}\right)\rightarrow C\left(\Bbb R^{+}\right)$$
$$R_{x}(f)(t) = t^{n - 1}\int_{|\theta| = 1}f(x + t\theta)d\theta, t\geq0.$$
\end{defin}

For a point $x\in\Bbb R^{n}$, if $f\in C\left(\Bbb R^{n}\right)$ and $\Lambda\in C(\Bbb R^{+})$ we have
$$\left\langle R_{x}f, \Lambda\right\rangle_{\Bbb R^{+}} = \int_{0}^{\infty}R_{x}(f)(t)\Lambda(t)dt = \int_{0}^{\infty}t^{n - 1}\int_{|\theta| = 1}f(x + t\theta)d\theta\Lambda(t)dt$$
in case where the last integral converges. Making the change of variables
$$y = x + t\theta, dy = t^{n - 1}d\theta dt,$$
yields
$$\left\langle R_{x}f, \Lambda\right\rangle_{\Bbb R^{+}} = \int_{\Bbb R^{n}}f(y)\Lambda(|x - y|)dy = \left\langle f, \Lambda(|x - .|) \right\rangle_{\Bbb R^{n}}.$$
Hence we define the dual SMT at $x$
$$R_{x}^{\ast}:C\left(\Bbb R^{+}\right)\rightarrow C\left(\Bbb R^{n}\right)$$
by
$$R_{x}^{\ast}\left(\Lambda\right)(y) = \Lambda(|x - y|).$$
Hence we arrive to the following definition:
\begin{defin}
Let $x\in\Bbb R^{n}$ and $T:C\left(\Bbb R^{n}\right)\rightarrow\Bbb R$ be a given distribution, then the spherical mean transform $R_{x}$ of $T$ is defined by
$$R_{x}T:C\left(\Bbb R^{+}\right)\rightarrow\Bbb R,$$
$$(R_{x}T)(\Lambda) = T(R_{x}^{\ast}\Lambda) = T(\Lambda(|x - .|)).$$
\end{defin}

\section{Main Results}

\begin{thm}
Let $m$ be a given positive integer and let $f:C(\Bbb R^{n})\rightarrow\Bbb R$ be a distribution of the form
\begin{equation}
f = \sum_{k = 1}^{m}a_{k}\delta_{x_{k}}, x_{i}\in\Bbb R^{n}, a_{i}\in\Bbb R\setminus\{0\}, 1\leq i\leq m,
\end{equation}
such that $x_{i}\neq x_{j}$ and $a_{i}\neq a_{j}$ in case where $i\neq j$. Assume that the SMT of $f$ is given at $\frac{1}{2}(n\cdot m(m - 1) + 2n + 2)$ points such that there is no hyper-plane in $\Bbb R^{n}$ which contains more than $n$ of these given points. Then the points $x_{1},...,x_{m}$ and the amplitudes $a_{1},...,a_{m}$ can be uniquely recovered.
\end{thm}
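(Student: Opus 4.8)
The plan is to exploit the explicit form of $R_{y}f$ for point-mass distributions. By Definition 2.3, for $f=\sum_{k=1}^{m}a_{k}\delta_{x_{k}}$ and any $\Lambda\in C(\Bbb R^{+})$,
$$(R_{y}f)(\Lambda)=f\bigl(\Lambda(|y-\cdot|)\bigr)=\sum_{k=1}^{m}a_{k}\Lambda(|y-x_{k}|),$$
so $R_{y}f=\sum_{k=1}^{m}a_{k}\,\delta_{|y-x_{k}|}$ is a finite sum of point masses on $\Bbb R^{+}$, located at the distances $|y-x_{k}|$ and carrying the weights $a_{k}$. Feeding the monomials $\Lambda(t)=t^{j}$, $j=0,1,\dots,2m-1$, into this identity yields the power sums $\mu_{j}(y)=\sum_{k=1}^{m}a_{k}|y-x_{k}|^{j}$, that is, a Prony system of $2m$ equations in the $2m$ unknowns $|y-x_{1}|,\dots,|y-x_{m}|,a_{1},\dots,a_{m}$. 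Call a center $y$ \emph{admissible} if the $m$ distances $|y-x_{1}|,\dots,|y-x_{m}|$ are pairwise distinct. At an admissible center the Hankel matrix built from $\mu_{0}(y),\dots,\mu_{2m-1}(y)$ is nonsingular, the Prony system is regular, and solving it recovers the multiset $\{(|y-x_{k}|,a_{k}):1\le k\le m\}$; since the amplitudes are pairwise distinct this multiset is the graph of a function, so from it we read off the set $\{a_{1},\dots,a_{m}\}$ and, for each amplitude $a_{k}$, the distance from $y$ to the corresponding point $x_{k}$. A non-admissible center, by contrast, produces an $R_{y}f$ with fewer than $m$ point masses, so admissibility is detectable from the data.

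Next I would count the admissible centers. A center $y$ is non-admissible exactly when it lies on the perpendicular-bisector hyperplane $H_{ij}$ of some segment $[x_{i},x_{j}]$ with $i\neq j$, and there are $\binom{m}{2}$ such hyperplanes. Since, by hypothesis, no hyperplane of $\Bbb R^{n}$ contains more than $n$ of the given centers, at most $n\binom{m}{2}=\tfrac{1}{2}n\,m(m-1)$ of them are non-admissible, hence at least
$$\tfrac{1}{2}\bigl(n\,m(m-1)+2n+2\bigr)-\tfrac{1}{2}n\,m(m-1)=n+1$$
of them are admissible. Fix $n+1$ admissible centers $y_{0},\dots,y_{n}$; the same hypothesis forbids a hyperplane through all of them, so $y_{0},\dots,y_{n}$ are affinely independent.

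Now I would process these $n+1$ admissible centers jointly. For each amplitude $a_{k}$ (all of which are by now known) we have extracted the distances $r_{i}:=|y_{i}-x_{k}|$, $i=0,\dots,n$, and --- crucially --- they all refer to the \emph{same} point $x_{k}$, because the matching across different centers is carried out through the value $a_{k}$ and the amplitudes are distinct. Subtracting the equation $|y_{0}-x_{k}|^{2}=r_{0}^{2}$ from $|y_{i}-x_{k}|^{2}=r_{i}^{2}$ gives the linear system $2\langle y_{i}-y_{0},x_{k}\rangle=|y_{i}|^{2}-|y_{0}|^{2}+r_{0}^{2}-r_{i}^{2}$, $i=1,\dots,n$, whose coefficient matrix has rows $y_{1}-y_{0},\dots,y_{n}-y_{0}$ spanning $\Bbb R^{n}$ by affine independence; hence $x_{k}$ is uniquely determined. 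Running this over $k=1,\dots,m$ recovers all the points $x_{k}$ together with their amplitudes $a_{k}$, which proves the theorem.

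The step I expect to require the most care is the bookkeeping around admissibility. One must verify that admissible centers can be recognized from the given data --- they can, since the number of point masses appearing in $R_{y}f$, equivalently the rank of the infinite Hankel matrix of the moments $(R_{y}f)(t^{j})$, equals $m$ precisely at the admissible centers --- and that the distance information harvested at different admissible centers can be consistently attached to one and the same $x_{k}$, which is exactly where the distinctness of $a_{1},\dots,a_{m}$ is used and is the reason the theorem assumes it. One should also note that coincidences among the merged weights $a_{i}+a_{j}$ that may occur at non-admissible centers are harmless, because the counting argument uses only the \emph{number} of point masses, not their weights. The remaining ingredients --- regularity and solvability of a Prony system with distinct nodes and nonzero weights, and the triangulation of a point of $\Bbb R^{n}$ from $n+1$ affinely independent reference points --- are classical.
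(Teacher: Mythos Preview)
Your argument is correct and follows the paper's overall strategy---compute the moments $(R_{y}f)(t^{j})=\sum_{k}a_{k}|y-x_{k}|^{j}$, set up Prony, count admissible centers via the perpendicular-bisector hyperplanes, and triangulate from $n+1$ affinely independent admissible centers---but your matching step is genuinely different and cleaner. The paper extracts the amplitudes only once, at the first admissible center $y_{1}$, and then for each subsequent $y_{l}$ it recovers only the unordered set of roots $\{\xi_{1,l},\dots,\xi_{m,l}\}$; to decide which root corresponds to which $x_{k}$ it plugs every permutation of the roots into the Vandermonde system with the known amplitudes and invokes a separate combinatorial lemma (Lemma~5.2 in the appendix) to show that at most one permutation can be consistent when the $a_{k}$ are distinct. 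You instead solve the full Prony system at \emph{each} admissible center, obtaining the paired data $\{(|y_{l}-x_{k}|,a_{k})\}$ directly, and then use the distinct amplitude values themselves as tags to align the distances across centers. This bypasses Lemma~5.2 entirely and makes the role of the hypothesis $a_{i}\neq a_{j}$ completely transparent. Both approaches detect admissibility the same way (nondegeneracy of the Hankel matrix $U$), and both reach the same triangulation endgame; your route is shorter and more self-contained, at the small cost of solving the amplitude system $n+1$ times rather than once.
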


\begin{thm}
Let $m$ be a given positive integer and let $f:C(\Bbb R^{n})\rightarrow\Bbb R$ be a distribution of the form
$$f = \sum_{k = 1}^{m}a_{k}\delta_{(\theta_{k},\rho_{k})},\hskip0.1cm a_{i}\in\Bbb R\setminus\{0\}, \rho_{i}\in(0, \infty), \theta_{i}\in\Bbb S^{n - 1}, 1\leq i\leq m,$$
such that $a_{i}\neq a_{j}$ for $i\neq j$ and the hyperplanes $\langle x,\theta_{1}\rangle = \rho_{1},...,\langle x,\theta_{m}\rangle = \rho_{m}$ are all distinct (as subsets of $\Bbb R^{n}$). Assume that the SMT of $f$ is given at $n\cdot m(m - 1) + 2n + 1$ points such that there is no hyper-plane in $\Bbb R^{n}$ which contains more than $n$ of these given points. Then the parameters  $a_{1},\rho_{1},\theta_{1},...,a_{m},\rho_{m}, \theta_{m}$ can be uniquely recovered.
\end{thm}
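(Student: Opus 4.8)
The plan is to mimic the reconstruction scheme for signals of the form (1.2) (sums of shifted delta functions) but adapted to the dual object, where each $\delta_{(\theta_k,\rho_k)}$ is a uniform distribution on a hyperplane. The starting observation is that applying the SMT at a center $x$ to $\delta_{(\theta_k,\rho_k)}$ gives, by Definition 2.3, the pairing $\delta_{(\theta_k,\rho_k)}\big(\Lambda(|x-\cdot|)\big) = \int_{\langle y,\theta_k\rangle = \rho_k}\Lambda(|x-y|)\,dm_y$, and this quantity depends on $x$ only through the signed distance $d_k(x) = \langle x,\theta_k\rangle - \rho_k$ from $x$ to the hyperplane (and the dimension). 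Concretely, integrating $\Lambda(|x-y|)$ over the hyperplane $\langle y,\theta_k\rangle=\rho_k$ reduces, after foliating the hyperplane by $(n-2)$-spheres centered at the foot of the perpendicular from $x$, to a one-dimensional integral that depends only on $|d_k(x)|$. So the full datum $R_x f = \sum_k a_k R_x\delta_{(\theta_k,\rho_k)}$ is, as an element of $C(\mathbb R^+)$, a fixed universal transform applied to the profile that only remembers the multiset of pairs $\big(a_k, |d_k(x)|\big)$.

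First I would make this precise: for each of the $N := n\cdot m(m-1)+2n+1$ given centers $z_1,\dots,z_N$, the datum $R_{z_j}f$ determines — via an appropriate moment extraction or transform inversion in the radial variable $t$, exactly as in the delta-function case of Theorem 3.1 — the unordered list $\{(a_k,|d_k(z_j)|):1\le k\le m\}$. Since the amplitudes $a_1,\dots,a_m$ are mutually distinct, this list can be re-indexed consistently across all $j$: knowing $a_k$ pins down which term is which, so for every $k$ and every $j$ we recover $|d_k(z_j)| = |\langle z_j,\theta_k\rangle - \rho_k|$. Thus the problem decouples into $m$ independent problems, each of the following shape: given the distances from $N$ points (in general position, no $n+1$ on a hyperplane) to an unknown hyperplane $H_k = \{\langle x,\theta_k\rangle = \rho_k\}$, recover $\theta_k\in\mathbb S^{n-1}$ and $\rho_k$.

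Next I would solve this single-hyperplane recovery problem, and this is where the Prony-type/general-position machinery enters, paralleling the treatment in Theorem 3.1. Squaring, $d_k(z_j)^2 = \langle z_j,\theta_k\rangle^2 - 2\rho_k\langle z_j,\theta_k\rangle + \rho_k^2$; the right-hand side is a quadratic polynomial in the coordinates of $z_j$ whose coefficients are the unknown entries of $\theta_k\theta_k^{\mathsf T}$ (a rank-one symmetric matrix), of $\rho_k\theta_k$, and the scalar $\rho_k^2$ — a total of $\binom{n+1}{2} + n + 1 = \frac12(n^2+3n+2) = \frac12(n+1)(n+2)$ unknowns, but they are not free: they must be the coefficients coming from a genuine $(\theta_k,\rho_k)$ with $|\theta_k|=1$. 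Taking differences $d_k(z_j)^2 - d_k(z_1)^2$ kills $\rho_k^2$ and leaves a linear system in the remaining unknowns; the general-position hypothesis on the $z_j$ (no $n+1$ in a hyperplane, and enough of them) is exactly what guarantees this linear system has full rank so that $\theta_k\theta_k^{\mathsf T}$ and $\rho_k\theta_k$ are determined, whence $\theta_k$ (up to sign) and then $\rho_k$; the sign ambiguity is fixed by the requirement $\rho_k\in(0,\infty)$, and the distinctness of the hyperplanes guarantees the recovered parameters genuinely separate the $m$ terms. The bookkeeping that the count $n\cdot m(m-1)+2n+1$ suffices — $2n+1$ points to handle the linear algebra for a single hyperplane with margin, and $n\cdot m(m-1)$ extra points to resolve, across all pairs $k\neq k'$, the finitely many spurious coincidences $|d_k(z_j)|=|d_{k'}(z_j)|$ that could otherwise obstruct the re-indexing or the general-position argument — is the step I expect to be the main obstacle, since it requires carefully tracking how many hyperplanes can be forced through subsets of the $z_j$ and ruling those configurations out; the analytic content (reducing the hyperplane integral to a function of distance, inverting in $t$) is comparatively routine and essentially identical to what is already done for Theorem 3.1.
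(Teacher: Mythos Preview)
Your overall architecture matches the paper's: extract from $R_{y}f$ a Prony-type system whose nodes are (functions of) the distances $|d_k(y)|=|\langle y,\theta_k\rangle-\rho_k|$, use a counting argument to locate enough ``good'' centers $y$ at which all $m$ distances are distinct, use the distinct amplitudes to match indices across centers, and finally recover each hyperplane from its distances to the good centers. However, two of your concrete steps do not go through as written.

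\textbf{The moment extraction is not the same as in Theorem 3.1.} You call this step ``comparatively routine and essentially identical'' to the delta-function case, but the test functions $h_l(t)=t^l$ used there give divergent integrals against a hyperplane distribution: $\int_{\langle x,\theta\rangle=\rho}|y-x|^{l}\,dm_x=\infty$ for every $l\ge 0$. The paper's substitute is to test against Gaussians $h_l(t)=e^{-lt^{2}}$; after an orthogonal change of variables one gets
\[
(R_y f)(h_l)=\Big(\tfrac{\pi}{l}\Big)^{\frac{n-1}{2}}\sum_{k=1}^{m}a_k\,\lambda_k^{\,l},\qquad \lambda_k=e^{-|d_k(y)|^{2}},
\]
which is exactly a power-sum sequence in the new nodes $\lambda_k$ and so feeds directly into Prony. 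This choice of test function is the one nontrivial analytic input, and it is genuinely different from Theorem 3.1.

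\textbf{Your single-hyperplane recovery is underdetermined.} You propose to recover $\theta_k\theta_k^{\mathsf T}$ and $\rho_k\theta_k$ by solving the linear system obtained from the values $d_k(z_j)^2$, $j=1,\dots,2n+1$. After differencing against $j=1$ you have $2n$ linear equations in $\tfrac{n(n+1)}{2}+n=\tfrac{n(n+3)}{2}$ unknowns, which already for $n=2$ is $4$ equations in $5$ unknowns; the system cannot have full rank, and the general-position hypothesis on the $z_j$ (no $n+1$ on a hyperplane) does not help, since it says nothing about quadratic independence. The missing constraint is the nonlinear rank-one condition on $\theta_k\theta_k^{\mathsf T}$, and you give no mechanism for imposing it. The paper bypasses all of this with a pigeonhole uniqueness argument: if two distinct hyperplanes $H,H'$ had the same distances to all $2n+1$ centers, those centers would lie in the equidistant locus $\{z:\mathrm{dist}(z,H)=\mathrm{dist}(z,H')\}$, which is a union of at most two hyperplanes, forcing at least $n+1$ of the $2n+1$ centers onto one hyperplane --- contradiction. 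This is precisely what pins the count to $2n+1$, and it replaces your linear-algebra step entirely.
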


\begin{thm}
Let $m$ be a given positive integer and $g$ be a given function, defined on $\Bbb R^{+}$, such that its radial extension belongs to the Schwartz space $\mathrm{S}\left(\Bbb R^{n}\right)$. Let $f:\Bbb R^{n}\rightarrow\Bbb R$ be a function of the form
$$f(x) = \sum_{k = 1}^{m}a_{k}g\left(\left|x - x_{k}\right|\right), x_{i}\in\Bbb R^{n}, a_{i}\in\Bbb R\setminus\{0\}, 1\leq i\leq m,$$
such that $x_{i}\neq x_{j}$ and $a_{i}\neq a_{j}$ if $i\neq j$. Assume that the SMT of $f$ is given at $\frac{1}{2}(n\cdot m(m - 1) + 2n + 2)$ points $y$ such that there is no hyper-plane in $\Bbb R^{n}$ which contains more than $n$ of these given points. Then the points $x_{1},...,x_{m}$ and the amplitudes $a_{1},...,a_{m}$ can be uniquely recovered.
\end{thm}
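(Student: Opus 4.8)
The plan is to reduce Theorem 3.3 to Theorem 3.1 by identifying the spherical mean transform of a function of the form $f(x) = \sum_{k=1}^{m} a_k g(|x - x_k|)$ with data that encodes the atomic distribution $\sum_{k=1}^{m} a_k \delta_{x_k}$. The key observation is that, by the computation preceding Definition 2.3, for any point $y$ the pairing $\langle R_y f, \Lambda \rangle_{\Bbb R^{+}} = \langle f, \Lambda(|y - \cdot|)\rangle_{\Bbb R^{n}}$, and since the radial extension of $g$ is Schwartz, we may convolve against suitable test functions to recover, from $R_y f$, the quantities $\sum_{k=1}^{m} a_k \big(g * \varphi\big)(|y - x_k|)$ for a rich enough family of $\varphi$. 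The crucial point is that the map $h \mapsto \sum_k a_k h(|y-x_k|)$, applied with $h$ ranging over radial Schwartz functions, determines (and is determined by) the same Prony-type data in the unknowns $|y - x_k|$ and $a_k$ that arises for the atomic case, provided $g$ itself is not annihilated by the relevant moment functionals.

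First I would make precise the passage from $R_y f$ to the atomic data. Since $g$'s radial extension is in $\mathrm{S}(\Bbb R^n)$, its $n$-dimensional Fourier transform $\widehat{g}$ is a radial Schwartz function; writing everything on the Fourier side, $\widehat{f}(\xi) = \widehat{g}(\xi)\sum_{k=1}^{m} a_k e^{-i\langle \xi, x_k\rangle}$. Knowing $R_y f$ for a given center $y$ is equivalent (via the projection-slice / spherical-mean relations, as in the papers cited for the plane and quadric cases) to knowing $\widehat{f}$ along with the radial profile at $y$; dividing out the known, generically nonvanishing factor $\widehat{g}$ on a dense set of frequencies recovers $\sum_k a_k e^{-i\langle \xi, x_k\rangle}$, i.e. exactly the Fourier data of the distribution $\sum_k a_k \delta_{x_k}$ "seen from $y$". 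Equivalently and more in the spirit of the present paper, I would show directly that from $R_y f$ one extracts the numbers $\sum_{k=1}^m a_k p(|y - x_k|^2)$ for every polynomial $p$ (by integrating $R_y f$ against appropriately chosen radial weights built from $g$), which is precisely the moment data feeding the Prony system of Theorem 3.1.

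Having established this dictionary, I would then invoke Theorem 3.1 verbatim: the same count $\frac{1}{2}(n\cdot m(m-1) + 2n + 2)$ of centers $y$ in general position (no hyperplane containing more than $n$ of them) guarantees regularity of the resulting Prony systems, from which one extracts the distances $\{|y - x_k|\}$ with the labels disambiguated by the assumed distinctness of the amplitudes $a_k$, and then triangulates the points $x_1, \dots, x_m$ and reads off $a_1, \dots, a_m$.

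The main obstacle is the division step: one must ensure that the known factor contributed by $g$ (its Fourier transform, or the family of moment functionals $\Lambda$ paired against $R_y f$) does not vanish identically on the relevant set, so that the atomic data is genuinely recoverable from $R_y f$ rather than being lost in the kernel of the forward map. Because $g$'s radial extension is Schwartz and not identically zero, $\widehat{g}$ is real-analytic and radial, hence its zero set is a closed set of measure zero (a countable union of spheres), so $\widehat{f}/\widehat{g}$ extends continuously — indeed analytically — across those spheres by the Schwartz decay of $\widehat{f}$; spelling out this removable-singularity argument, and checking that the finitely many Prony moments we actually need survive it, is the one place where real work beyond the atomic case is required. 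Everything downstream is then a direct citation of Theorem 3.1.
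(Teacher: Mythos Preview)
Your overall strategy---reduce to Theorem~3.1 by extracting from $R_y f$ the power sums $\sum_k a_k |y-x_k|^{2j}$ and then invoke the Prony argument verbatim---is exactly what the paper does. The gap is in the mechanism you propose for the extraction.

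Your route (a), passing through $\widehat f(\xi)=\widehat g(\xi)\sum_k a_k e^{-i\langle\xi,x_k\rangle}$ and then dividing by $\widehat g$, does not work as stated: knowing $R_y f(t)$ for all $t\ge 0$ at a \emph{single} center $y$ gives you only the radial averages $\int_{\Bbb R^n} f(x)\,\Lambda(|y-x|)\,dx$ for radial test functions $\Lambda$, not $\widehat f$ itself. There is no projection-slice relation that recovers the full Fourier transform of $f$ from spherical means about one center; all the angular information about $f(y+\cdot)$ is lost. So the sentence ``knowing $R_y f$ for a given center $y$ is equivalent \dots\ to knowing $\widehat f$'' is false, and this route should be dropped.

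Your route (b) is the correct one and is what the paper carries out, but you are missing the specific tool that makes it work. The paper takes the Hankel transform (of order $\tfrac n2-1$) of $t\mapsto t^{1-n}(R_y f)(t)$ and uses the addition formula
\[
\int_{|\theta|=1} j_{\frac n2-1}\bigl(\lambda|x+r\theta|\bigr)\,d\theta \;=\; (2\pi)^{n/2}\,j_{\frac n2-1}(\lambda r)\,j_{\frac n2-1}(\lambda|x|)
\]
to show that this Hankel transform factors as $G(\lambda)\sum_k a_k\, j_{\frac n2-1}(\lambda|y-x_k|)$, where $G$ is the Hankel transform of $g$. Dividing by $G(\lambda)$ and differentiating $2j$ times at $\lambda=0$ then yields $\sum_k a_k|y-x_k|^{2j}$ explicitly (using the Taylor expansion of $j_{\frac n2-1}$ at the origin). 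This Bessel/Hankel identity is precisely the ``appropriately chosen radial weight'' you allude to; without naming it your sketch remains formal. Your observation about the zero set of $\widehat g$ (equivalently $G$) being thin is relevant here---the paper divides by $G$ near $\lambda=0$ without comment, so if $G(0)=\int g_0=0$ one does need the removable-singularity argument you outline. That is a point where your write-up is actually more careful than the paper's.
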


\section{Proofs}

\begin{proof}[\bf{Proof of Theorem 3.1:}]
Denote by $\Gamma$ the set of points on which the SMT of $f$ is given. Then, for every $y\in\Gamma$ and $h_{l}\in C(\Bbb R^{+})$, where $h_{l}(t) = t^{l}\hskip0.1cm(l\in\Bbb N\cup\{0\})$, we have
$$(R_{y}f)(h_{l}) = f(R_{y}^{\ast}h_{l}) = f\left(h_{l}\left(|y - .|\right)\right) $$
$$ = \sum_{k = 1}^{m}a_{k}\delta_{x_{k}}\left(h_{l}\left(|y - .|\right)\right) = \sum_{k = 1}^{m}a_{k}h_{l}\left(|y - x_{k}|\right) = \sum_{k = 1}^{m}a_{k}|y - x_{k}|^{l}.$$

Now, if we denote $\tau_{l} = (R_{y}f)(h_{l})$ and take $l = 0,1,...,2m - 1$ we get the following system of equations
\begin{equation}
\left(
    \begin{array}{cccc}
        1 & 1 & ... & 1\\
        |y - x_{1}| & |y - x_{2}| & ... & |y - x_{m}|\\
        ...........\\
        |y - x_{1}|^{2m - 1} & |y - x_{2}|^{2m - 1} & ... & |y - x_{m}|^{2m - 1}
    \end{array}
\right)
\left(
    \begin{array}{c}
    a_{1}\\
    a_{2}\\
    ...\\
    a_{m}
    \end{array}
\right) =
\left(
\begin{array}{c}
\tau_{0}\\
\tau_{1}\\
...\\
\tau_{2m - 1}
\end{array}
\right) = \overline{\tau}.
\end{equation}
The system of equations (4.1), where $\lambda_{i} = |y - x_{i}|$ and $a_{i}$ for $i = 1,...,m$ are the unknown variables, is of Prony's type and there is a well known literature for the solution of this type of equations (see \cite{3, 5, 8, 12, 13, 17}). However, we would like to solve this system explicitly since we want to show where the conditions that $a_{i}\neq 0, i = 1,...,m$ and that there is no hyper-plane which passes through more than $n$ points in $\Gamma$ are used. We will follow the method that was introduced in \cite{12}.

For this, let
$$p(t) = c_{0} + c_{1}t + ... + c_{m - 1}t^{m - 1} + t^{m}$$
be the unique monic polynomial whose roots (with possible multiplicities) are $t = \lambda_{1},...,\lambda_{m}$. Now observe that for $k = 0,...,m - 1$ the polynomial $t^{k}p(t)$ also vanishes at $t = \lambda_{1},...,\lambda_{m}$. Hence for every $k = 0,...,m - 1$ the vector
$$v_{k} = (\underset{k}{\underbrace{0,...,0}}, c_{0}, c_{1},...,c_{m - 1}, 1, \underset{m - 1 - k}{\underbrace{0,...,0}})$$
is in the left null space of the $2m\times m$ matrix in the left hand side of equation (4.1) and thus it is also orthogonal to the vector $\overline{\tau}$ in the right hand side. Observe that we can write the following system of equations
$$v_{k}\cdot\overline{\tau} = 0, k = 0,..., m - 1$$
as follows
\begin{equation}
\left(
    \begin{array}{cccc}
        \tau_{0} & \tau_{1} & ... & \tau_{m - 1}\\
        \tau_{1} & \tau_{2} & ... & \tau_{m}\\
        ..........\\
        \tau_{m - 1} & \tau_{m} & ... & \tau_{2m - 2}
    \end{array}
\right)
\left(
    \begin{array}{c}
        c_{0}\\
        c_{1}\\
        ...\\
        c_{m - 1}
    \end{array}
\right)
 =
- \left(
    \begin{array}{c}
        \tau_{m}\\
        \tau_{m + 1}\\
         ...\\
         \tau_{2m - 1}
    \end{array}
\right).
\end{equation}
Denote the matrix in the left hand side of (4.2) by $U$. Then, in order to solve the system of equations (4.2) we need to guarantee that $U$ is non degenerate. For this we use the following factorization $U = V\Lambda V^{T}$, where
$$ V = \left(
    \begin{array}{cccc}
        1 & 1 & ... & 1\\
        \lambda_{1} & \lambda_{2} & ... & \lambda_{m}\\
        \lambda_{1}^{2} & \lambda_{2}^{2} & ... & \lambda_{m}^{2}\\
        ...........\\
        \lambda_{1}^{m - 1} & \lambda_{2}^{m - 1} & ... & \lambda_{m}^{m - 1}
    \end{array}
\right),
\Lambda = \left(
    \begin{array}{cccc}
        a_{1} & 0 & ... & 0\\
        0 & a_{2} & ... & 0\\
        .....\\
        0 & 0 & ... & a_{m}
    \end{array}
\right),$$

which can be proved by direct calculations. Since, by assumption, $a_{i}\neq0, i = 1,...,m$ it follows that $\Lambda$ is non degenerate. Also, since $V$ is a Vandermonde matrix it follows that $V$ is degenerate if and only if there are two different indices $1\leq i,j\leq m$ such that $\lambda_{i} = \lambda_{j}$, or $|y - x_{i}| = |y - x_{j}|$. This occurs if $y$ is in equal distances from $x_{i}$ and $x_{j}$. However, we claim that we can find $n + 1$ points $y_{1},...,y_{n + 1}$ in the set $\Gamma$ such that for every $1\leq l\leq n + 1$ the following conditions hold:

\begin{equation}
|y_{l} - x_{i}| \neq |y_{l} - x_{j}|, \forall i,j, 1\leq i,j\leq m, i\neq j.
\end{equation}

Indeed, a point $y$ in $\Gamma$ satisfies $|y_{l} - x_{i}| = |y_{l} - x_{j}|$ for two different indices $i$ and $j$  if and only if it lies in the unique hyperplane $H_{i,j}$ which divides into two equal parts and is orthogonal to the vector $x_{i} - x_{j}$. Since, by assumption, there are at most $n$ points in $\Gamma$ on such hyperplane, and since there are at most $\frac{1}{2}m(m - 1)$ such hyperplanes, it follows, since $\Gamma$ contains $\frac{1}{2}(n\cdot m(m - 1) + 2n + 2)$ distinct points, that there are at least $n + 1$ points in $\Gamma$ which non of them lies in any of these hyperplanes. Hence, these points satisfy condition (4.3).

In order to check whether a point $y\in\Gamma$ satisfies condition (4.3) we just need to check whether the matrix $U$ (which depends on the point $y$) is non degenerate. As was just explained, we can find at least $n + 1$ such points.

Let $\Gamma' = \{y_{1},y_{2},...,y_{n + 1}\}$ be a subset of $\Gamma$ such that each $y_{i}\in \Gamma'$ satisfies condition (4.3). Let us take the point $y_{1}\in\Gamma'$ and build the system of equations (4.2) for this point. Since this system is non degenerate we can extract the coefficients $c_{0},c_{1},...,c_{m - 1}$ and find the roots of the polynomial $P = P(t)$ with these set of coefficients, let $\xi_{1},...,\xi_{m}$ be its roots. Now, we can assume without loss of generality that the point $x_{i}$ corresponds to $\xi_{i}$, i.e., $\xi_{i} = |y_{1} - x_{i}|$. Indeed, since there is a permutation $\sigma$ of $1,...,m$ such that $|y_{1} - x_{\sigma(i)}| = \xi_{i}$, we can just rename the points $x_{1},...,x_{m}$ and their corresponding amplitudes $a_{1},...,a_{m}$ according to this permutation. Since this renaming does not change the sum (3.1) which defines the distribution $f$, this step is valid.

Returning to equation (4.1) with $y = y_{1}$, observe that the $m\times m$ top submatrix in the left hand side of this equation is non degenrate since $|y_{1} - x_{i}| = \xi_{i}$ and the roots $\xi_{1},...,\xi_{m}$ are distinct. Hence, at this stage we can extract the amplitudes $a_{1},...,a_{m}$.

Now, our aim is to use the remaining points $y_{2},...,y_{n + 1}$ in order to extract the points $x_{1},...,x_{m}$. For this, for each $2\leq l\leq n + 1$ we solve equation (4.2) and extract the coefficients of the polynomial whose roots are

$$\{\xi_{1,l},...,\xi_{m,l}\} = \{|y_{l} - x_{1}|,...,|y_{l} - x_{m}|\}.$$

However, at this point we do not know which root $\xi_{j,l}$ corresponds to each point $x_{i}$ for $l\geq 2$. We can overcome this problem by inserting all the possible permutations of the roots $\xi_{1,l},...,\xi_{m,l}$ and check which one of them solves equation (4.1) (observe that at this point the amplitudes $a_{1},...,a_{m}$ are known). However, now the problem is that maybe there are two different permutations of these roots which solve equation (4.1). At this point we use the condition that $a_{i}\neq a_{j}, i\neq j$ to show that this case is impossible.

Indeed, suppose that there is a permutation of $\xi_{1,l},...,\xi_{m,l}$  which solves (4.1). By reordering and renaming the roots $\xi_{1,l},...,\xi_{m,l}$ we can assume that this permutation is the identity. Now suppose that there is another permutation $\sigma$ from $\{1,...,m\}$ to itself, which is different from the identity such that the permutation $\xi_{1}' = \xi_{\sigma(1),l},...,\xi_{m}' = \xi_{\sigma(m),l}$ also solves equation (4.1). Then, subtracting these two equations we have

$$
\left(
    \begin{array}{cccc}
        0 & 0 & ... &0\\
        \xi_{1,l} - \xi_{\sigma(1),l} & \xi_{2,l} - \xi_{\sigma(2),l} & ... &\xi_{m,l} - \xi_{\sigma(m),l}\\
        \xi_{1,l}^{2} - \xi_{\sigma(1),l}^{2} & \xi_{2,l}^{2} - \xi_{\sigma(2),l}^{2} & ... & \xi_{m,l}^{2} - \xi_{\sigma(m),l}^{2}\\
        ......\\
         \xi_{1,l}^{2m - 1} - \xi_{\sigma(1),l}^{2m - 1} & \xi_{2,l}^{2m - 1} - \xi_{\sigma(2),l}^{2m - 1} & ... & \xi_{m,l}^{2m - 1} - \xi_{\sigma(m),l}^{2m - 1}\\
    \end{array}
\right)
\left(
    \begin{array}{c}
        a_{1}\\
        a_{2}\\
        ...\\
        a_{m}
    \end{array}
\right)
 = \left(
    \begin{array}{c}
    0\\
    0\\
    ...\\
    0
    \end{array}
 \right).
$$

This in particular implies that

\begin{equation}
\left(
    \begin{array}{cccc}
        \xi_{1,l} - \xi_{\sigma(1),l} & \xi_{2,l} - \xi_{\sigma(2),l} & ... &\xi_{m,l} - \xi_{\sigma(m),l}\\
        \xi_{1,l}^{2} - \xi_{\sigma(1),l}^{2} & \xi_{2,l}^{2} - \xi_{\sigma(2),l}^{2} & ... &\xi_{m,l}^{2} - \xi_{\sigma(m),l}^{2}\\
        ......\\
         \xi_{1,l}^{m} - \xi_{\sigma(1),l}^{m} & \xi_{2,l}^{m} - \xi_{\sigma(2),l}^{m} & ... &\xi_{m,l}^{m} - \xi_{\sigma(m),l}^{m}\\
    \end{array}
\right)
\left(
    \begin{array}{c}
        a_{1}\\
        a_{2}\\
        ...\\
        a_{m}
    \end{array}
\right)
 = \left(
    \begin{array}{c}
    0\\
    0\\
    ...\\
    0
    \end{array}
 \right).
\end{equation}

\vskip0.3cm

Now we can use Lemma 5.2. Indeed, from (4.3) it follows that $\xi_{i,l}\neq\xi_{j,l}$ if $i\neq j$ and since $\sigma$ is different from the identity it follows that all the conditions of Lemma 5.2 are satisfied. Hence, any vector in the kernel of the matrix in the left hand side of (4.4) must have two equal components with different indices. This implies that $a_{i} = a_{j}$ for $i\neq j$ which is a contradiction to our initial assumption.

Hence, for each $1\leq i\leq m$ we know the following distances:

$$\mu_{i,l} = |y_{l} - x_{i}|, l = 1,...,n + 1.$$

Now since the there is no hyperplane in $\Bbb R^{n}$ which contains the points $y_{1},...,y_{n + 1}$, it is straightforward to check that from the data $\mu_{i,1},...,\mu_{i,n + 1}$ we can uniquely recover the point $x_{i}$.

\end{proof}

\begin{proof}[\bf{Proof of Theorem 3.2:}]

We will follow the same ideas as in the proof of Theorem 3.1. Denote by $\Gamma$ the set on which the SMT of $f$ is given. For every $y\in \Gamma$ and $h_{l}\in C(\Bbb R^{+})$, where $h_{l}(t) = e^{-lt^{2}}$ ($l\in\Bbb N$), we have
$$(R_{y}f)(h_{l}) = f(R_{y}^{\ast}h_{l}) = f(h_{l}(|y - .|)) = \sum_{k = 1}^{m}a_{k}\delta_{(\theta_{k},\rho_{k})}(h_{l}(|y - .|))$$
\begin{equation}
 = \sum_{k = 1}^{m}a_{k}\int_{\langle x, \theta_{k}\rangle = \rho_{k}}h_{l}(|y - x|)dm_{x} = \sum_{k = 1}^{m}a_{k}\int_{\langle x, \theta_{k}\rangle = \rho_{k}}e^{-l|y - x|^{2}}dm_{x}.
 \end{equation}

For each integral in the sum in the right hand side of equation (4.5), we make the following change of variables $x = y + A_{k}^{T}z, dm_{x} = dm_{z}$ where $A_{k}$ is an orthogonal matrix which satisfies $A_{k}\theta_{k} = e_{n}$ (observe that we do not know the matrix $A_{k}$, but we can still formally make this change of variables). Hence from equation (4.5) we have

$$ (R_{y}f)(h_{l}) = \sum_{k = 1}^{m}a_{k}\int_{z_{n} = \rho_{k} - \langle y, \theta_{k}\rangle}e^{-l|z|^{2}}dm_{z}$$
$$ = \sum_{k = 1}^{m}a_{k}e^{-l(\rho_{k} - \langle y, \theta_{k}\rangle)^{2}}\int_{\Bbb R^{n- 1}}e^{-l\left(z_{1}^{2} + ... + z_{n - 1}^{2}\right)}dz_{1}...dz_{n - 1}$$
$$ = \left(\frac{\pi}{l}\right)^{\frac{n - 1}{2}}\sum_{k = 1}^{m}a_{k}e^{-l|\rho_{k} - \langle y, \theta_{k}\rangle|^{2}}.$$
Now if we denote
$$\tau_{l} = \left(\frac{\pi}{l}\right)^{-\frac{n - 1}{2}}(R_{y}f)(h_{l}), \lambda_{k} = e^{-|\rho_{k} - \langle y, \theta_{k}\rangle|^{2}}$$
and take $l = 1,...,2m$ we can arrive, exactly as in Theorem 3.1, to the following system of equations:

\begin{equation}
\left(
    \begin{array}{cccc}
        \tau_{1} & \tau_{2} & ... & \tau_{m}\\
        \tau_{2} & \tau_{3} & ... & \tau_{m + 1}\\
        ..........\\
        \tau_{m} & \tau_{m + 1} & ... & \tau_{2m - 1}
    \end{array}
\right)
\left(
    \begin{array}{c}
        c_{0}\\
        c_{1}\\
        ...\\
        c_{m - 1}
    \end{array}
\right)
 =
- \left(
    \begin{array}{c}
        \tau_{m + 1}\\
        \tau_{m + 2}\\
         ...\\
         \tau_{2m}
    \end{array}
\right),
\end{equation}

\vskip0.2cm

where $c_{0},...,c_{m - 1}$ are the coefficients of the unique monic polynomial whose roots (with possible multiplicities) are $\lambda_{1},...,\lambda_{m}$. Denote the matrix in the left hand side of (4.6) by $U$, then we have the following factorization $U = V_{1}\Lambda V_{2}$, where
$$ V_{1} = \left(
    \begin{array}{cccc}
        \lambda_{1} & \lambda_{2} & ... & \lambda_{m}\\
        \lambda_{1}^{2} & \lambda_{2}^{2} & ... & \lambda_{m}^{2}\\
        ......\\
        \lambda_{1}^{m} & \lambda_{2}^{m} & ... & \lambda_{m}^{m}
    \end{array}
\right),
V_{2} = \left(
    \begin{array}{cccc}
        1 & \lambda_{1} & ... & \lambda_{1}^{m - 1}\\
        1 & \lambda_{2} & ... & \lambda_{2}^{m - 1}\\
        ......\\
        1 & \lambda_{m} & ... & \lambda_{m}^{m - 1}
    \end{array}
\right),
\Lambda = \left(
    \begin{array}{cccc}
        a_{1} & 0 & ... & 0\\
        0 & a_{2} & ... & 0\\
        .....\\
        0 & 0 & ... & a_{m}
    \end{array}
\right)$$

which can be proved by direct calculations. Since both $V_{1}$ and $V_{2}$ are Vandermonde's type matrices and $\lambda_{i}\neq0, i = 1,...,m$, it follows that $V_{1}$ or $V_{2}$ are degenerate if and only if there exist two indices $i\neq j$ such that $\lambda_{i} = \lambda_{j}$. This occurs only if
$$|\rho_{i} - \langle y, \theta_{i}\rangle| = |\rho_{j} - \langle y, \theta_{j}\rangle| $$
which implies that $y$ is at the same distance from the planes $\langle x, \theta_{i}\rangle = \rho_{i} $, $\langle x, \theta_{j}\rangle = \rho_{j}$, $x\in \Bbb R^{n}$. We claim that we can find $2n + 1$ points $y_{1},...,y_{2n + 1}$ in the set $\Gamma$ such that for every $1 \leq l \leq 2n + 1$ the following conditions hold:

\begin{equation}
|\rho_{i} - \langle y_{l}, \theta_{i}\rangle| \neq |\rho_{j} - \langle y_{l}, \theta_{j}\rangle|, \forall i,j, 1\leq i,j\leq m, i\neq j.
\end{equation}

Indeed, a point $y$ in $\Gamma$ satisfies $|\rho_{i} - \langle y, \theta_{i}\rangle| = |\rho_{j} - \langle y, \theta_{j}\rangle|$ for two different indices $i$ and $j$ if and only if it lies on one of the hyperplanes
$$\langle x, \theta_{i} + \theta_{j}\rangle = \rho_{i} + \rho_{j}, \langle x, \theta_{i} - \theta_{j}\rangle = \rho_{i} - \rho_{j}$$

(in case where $\theta_{j} = \pm\theta_{i}$ then this occurs only if $y$ lies on the hyperplane\newline $\langle x,\theta_{i}\rangle = \frac{1}{2}\left(\rho_{i} \pm \rho_{j}\right)$). Since, by assumption, there are at most $n$ points in $\Gamma$ on any of these hyperplanes, and since there are at most $m(m - 1)$ such hyperplanes, it follows, since $\Gamma$ contains $n\cdot m(m - 1) + 2n + 1$ distinct points, that there are at least $2n + 1$ points in $\Gamma$ which non of them lies in any of these hyperplanes. Hence, these points satisfy condition (4.7).

In order to check whether a point $y \in \Gamma$ satisfies condition (4.7) we just need to check whether the matrix $U$ (which depends on the point $y$) is non degenerate. As was just explained, we can find at least $2n + 1$ such points.

Now, exactly as we did in Theorem 3.1, we extract a subset $\Gamma' = \{y_{1},...,y_{2n + 1}\}$ of $\Gamma$ such that each point $y_{l}$ in $\Gamma'$ satisfies condition (4.7). We take the point $y_{1}$ and build its corresponding polynomial whose roots are
$$\{\xi_{1},...,\xi_{m}\} = \{|\rho_{1} - \left\langle y_{1}, \theta_{1}\right\rangle|,...,|\rho_{m} - \left\langle y_{1}, \theta_{m}\right\rangle|\}.$$

As we did in Theorem 3.1, we can assume without loss of generality that the hyperplane $\rho_{i} - \left\langle y, \theta_{i}\right\rangle$ corresponds to the root $\xi_{i}$ and then solve a system of equations similar to (4.1) in order to determine the amplitudes $a_{1},...,a_{m}$. \\

In order to extract the parameters $\rho_{1},\theta_{1},..., \rho_{m},\theta_{m}$ we use the remaining points $y_{2},...,y_{2n + 1}$ in $\Gamma'$. For each $2\leq l\leq 2n + 1$ we build the monic polynomial whose roots are
$$\{\xi_{1,l},...,\xi_{m,l}\} = \{|\rho_{1} - \left\langle y_{l}, \theta_{1}\right\rangle|,...,|\rho_{m} - \left\langle y_{l}, \theta_{m}\right\rangle|\}$$

and exactly as we did in Theorem 3.1, we can find the order in which these roots correspond to the $m$ pairs $(\theta_{1}, \rho_{1}),...,(\theta_{m}, \rho_{m})$. Hence, for every $y_{l}\in \Gamma', 1\leq l\leq 2n + 1$ we know the following distances:
$$|\rho_{1} - \langle y_{l},\theta_{1}\rangle|,...,|\rho_{m} - \langle y_{l},\theta_{m}\rangle|.$$

Hence for every $1 \leq i \leq m$ we have the following data:
\begin{equation}
|\rho_{i} - \langle y_{l},\theta_{i}\rangle|, l = 1,...,2n + 1.
\end{equation}
Now we claim that the data given in (4.8) determines the hyperplane $H : \langle x, \theta_{i}\rangle = \rho_{i}$ uniquely. Indeed, suppose that there is another hyperplane $H'$ such that every point $y_{l}, l = 1,...,2n + 1$ has equal distances from $H$ and $H'$. The set of points which have equal distances from $H$ and $H'$ is the union of two hyperplanes. Hence the points $y_{l}, l = 1,...,2n + 1$ lie in these hyperplanes and thus it follows that at least one hyperplane contains $n + 1$ of these points which is a contradiction to our assumption on the points in the set $\Gamma$. Hence Theorem 3.2 is proved.

\end{proof}

\begin{proof}[\bf{Proof of Theorem 3.3:}]
Let $G$ be the Hankel transform of $g$ of order $\frac{n}{2} - 1$:
\begin{equation}
G(\lambda) = \int_{0}^{\infty}g(r)j_{\frac{n}{2} - 1}(\lambda r)r^{n - 1}dr, \lambda\geq 0,
\end{equation}
where $j_{\nu}$ is the normalised Bessel function of order $\nu$, $j_{\nu}(\lambda) = \lambda^{-\nu}J_{\nu}(\lambda)$. Since the radial extension of $g$ is in $\mathrm{S}\left(\Bbb R^{n}\right)$, it follows from Lemma 5.1 that $g$ is continuously differentiable and belongs to $L^{1}\left(\Bbb R^{+}, r^{\frac{n - 1}{2}}\right)$. Hence, the Hankel transform (4.9) can be inverted in the following way:
$$g(r) = \int_{0}^{\infty}G(\lambda)j_{\frac{n}{2} - 1}(\lambda r)\lambda^{n - 1}d\lambda, r\geq0.$$
Hence, from the definition of the function $f$ it follows that
$$f(x) = \sum_{k = 1}^{m}a_{k}\int_{0}^{\infty}G(\lambda)j_{\frac{n}{2} - 1}\left(\lambda\left|x - x_{k}\right|\right)\lambda^{n - 1}d\lambda.$$
Thus, taking the SMT of $f$ at a point $y$ we have
$$(R_{y}f)(t) = \sum_{k = 1}^{m}a_{k}\int_{0}^{\infty}G(\lambda)\left(R_{y}\left(j_{\frac{n}{2} - 1}\left(\lambda\left|. - x_{k}\right|\right)\right)\right)(t)\lambda^{n - 1}d\lambda$$
$$ = \sum_{k = 1}^{m}a_{k}\int_{0}^{\infty}G(\lambda)\int_{|\theta| = 1}j_{\frac{n}{2} - 1}\left(\lambda\left|y - x_{k} + t\theta\right|\right)d\theta\lambda^{n - 1}d\lambda t^{n - 1}.$$
Using the following indentity
$$\int_{|\theta| = 1}j_{\frac{n}{2} - 1}\left(\lambda\left|x + r\theta\right|\right)d\theta = (2\pi)^{\frac{n}{2}}j_{\frac{n}{2} - 1}(\lambda r)j_{\frac{n}{2} - 1}(\lambda|x|)$$
on $R_{y}f$ yields
$$(R_{y}f)(t) = (2\pi)^{\frac{n}{2}}\sum_{k = 1}^{m}a_{k}\int_{0}^{\infty}G(\lambda)j_{\frac{n}{2} - 1}(\lambda|y - x_{k}|)j_{\frac{n}{2} - 1}(\lambda t)\lambda^{n - 1}d\lambda t^{n - 1}.$$
Hence from the last equation we have
\begin{equation}
\frac{t^{1 - n}(R_{y}f)(t)}{(2\pi)^{\frac{n}{2}}} = \int_{0}^{\infty}F(\lambda)j_{\frac{n}{2} - 1}(\lambda t)\lambda^{n - 1}d\lambda
\end{equation}
where
$$F(\lambda) = \left(\sum_{k = 1}^{m}a_{k}j_{\frac{n}{2} - 1}(\lambda|y - x_{k}|)\right)G(\lambda).$$
Now observe that the right hand side of equation (4.10) is the Hankel transform of $F$. In order to use the inverse Hankel transform on (4.10) we need to make sure that $F$ is continuously differentiable and belongs to $L^{1}\left(\Bbb R^{+}, r^{\frac{n - 1}{2}}\right)$.

From the definition of $F$ it follows that if $G$ belongs to $L^{1}\left(\Bbb R^{+}, r^{\frac{n - 1}{2}}\right)$ and is continuously differentiable then the same is true for $F$, these facts on $G$ are proved in Lemma 5.1. Hence, we can take the inverse Hankel transform on equation (4.10):
\begin{equation}
\frac{1}{(2\pi)^{\frac{n}{2}}}\int_{0}^{\infty}(R_{y}f)(t)j_{\frac{n}{2} - 1}(\lambda t)dt = \left(\sum_{k = 1}^{m}a_{k}j_{\frac{n}{2} - 1}(\lambda|y - x_{k}|)\right)G(\lambda).
\end{equation}
Dividing equation (4.11) by $G(\lambda)$ and then taking the derivatives $2k$ times with respect to $\lambda$ at $\lambda = 0$ yields
\begin{equation}
\frac{2^{2k - 1}(-1)^{k}k!\Gamma\left(\frac{n}{2} + k\right)}{\pi^{\frac{n}{2}}(2k)!}\left.\left(\frac{1}{G(\lambda)}
\int_{0}^{\infty}(R_{y}f)(t)j_{\frac{n}{2} - 1}(\lambda t)dt\right)^{(2k)}\right|_{\lambda = 0} = \sum_{k = 1}^{m}a_{k}\left|y - x_{k}\right|^{2k}.
\end{equation}
Now we can repeat the same procedure we used in Theorem 3.1. That is, by our assumption, the SMT of $f$ is given at $\frac{1}{2}(n\cdot m(m - 1) + 2n + 2)$ points $y$ such that there is no hyper-plane in $\Bbb R^{n}$ which contains more than $n$ of these given points. Hence, replacing each of these points $y$ in equation (4.12) and letting $k = 0,1,...,2m - 1$ yields a system of equations from which the points $x_{1},...,x_{m}$ and the amplitudes $a_{1},...,a_{m}$ can be uniquely recovered.

\end{proof}

\subsection{A Remark for the Case Where the Amplitudes Collide}

If the distribution $f$ has the form (3.1) and some of its amplitudes have the same value, then the procedure we used in Theorem 3.1, in order to extract $f$, does not work anymore. The reason is that the proof of Theorem 3.1 uses only $n + 1$ points which we know a priori to have different distances from each of the nodes $x_{i}, i = 1,...,m$. However, in case when the amplitudes collide we cannot determine, after extracting the roots of the corresponding polynomial, which root corresponds to each node $x_{i}$. This lack of information is crucial since without it the distribution $f$ cannot be uniquely determined.

For example, for $n = m = 2$ let us take the following two distributions
$$f_{1} = \delta(x - p_{1}) + \delta(x - p_{2}), f_{2} = \delta(x - q_{1}) + \delta(x - q_{2}),$$
where
$$p_{1} = (0,1), p_{2} = (2,-1), q_{1} = (0,-1), q_{2} = (2,1).$$
Since the proof of Theorem 3.1 uses only $n + 1 = 3$ points $y_{1},y_{2}$ and $y_{3}$, not on the same line, such that each point is known a priori to have different distances from each node that we want to extract, we can assume that the SMT is given only at these points. However, in this case the information received from the SMT can collide for the distributions $f_{1}$ and $f_{2}$. Indeed, if we take
$$y_{1} = (0,0), y_{2} = (2,0), y_{3} = (1,1)$$
then the points $y_{1}, y_{2}$ and $y_{3}$ are not on the same line and each of them has different distances from the nodes $p_{1}$ and $p_{2}$ of $f_{1}$ and the same is true for the distribution $f_{2}$. However, since
$$|y_{i} - p_{1}| = |y_{i} - q_{1}|, |y_{i} - p_{2}| = |y_{i} - q_{2}|, i = 1,2,$$
$$|y_{3} - p_{1}| = |y_{3} - q_{2}|, |y_{3} - p_{2}| = |y_{3} - q_{1}|,$$
the SMT receives the same information from $f_{1}$ and $f_{2}$ and hence they cannot be distinguished (see the picture below).

\vskip0.5cm

\begin{tikzpicture}

\draw[thick,->] (-2,0) -- (5,0);
\draw[thick,->] (0,-3) -- (0,3);
\foreach \Point/\PointLabel in {(0,0)/y_1, (4,0)/y_2, (2,2)/y_3}
\draw[fill=black] \Point circle (0.075) node[above right] {$\PointLabel$};
\foreach \Point/\PointLabel in {(0,2)/p_1, (4,-2)/p_2}
\draw[fill=red] \Point circle (0.075) node[above right] {$\PointLabel$};
\foreach \Point/\PointLabel in {(0,-2)/q_1, (4,2)/q_2}
\draw[fill=blue] \Point circle (0.075) node[above right] {$\PointLabel$};

\end{tikzpicture}

The same problem also occurs in the case of hyperplanes. For example, for $n = m = 2$ let us take the following two distributions
$$f_{1} = \delta_{l_{1}} + \delta_{l_{2}}, f_{2} = \delta_{k_{1}} + \delta_{k_{2}},$$
where
$$l_{1}: x - 2y = 0, l_{2}: 2x + y = 0, k_{1}: x + 2y = 0, k_{2}: 2x - y = 0.$$

The proof of Theorem 3.2 uses in this case only $5$ points such that no line passes through $3$ of them and such that each point is known a priori to have different distances from the lines $l_{1}$ and $l_{2}$ (and the same is true for the lines $k_{1}$ and $k_{2}$). However, in this case the information received from the
SMT can collide for the distributions $f_{1}$ and $f_{2}$. Indeed, if we take
$$y_{1} = (-1,0), y_{2} = (1,0), y_{3} = (0,-1), y_{4} = (0,1), y_{5} = (1,1),$$
then no line contains more than $3$ of these points and each of these points is at different distances from the lines $l_{1}$ and $l_{2}$ (the same is true for $k_{1}$ and $k_{2}$). However, since
$$d(y_{i}, l_{1}) = d(y_{i}, k_{1}), d(y_{i}, l_{2}) = d(y_{i}, k_{2}), i = 1,2,3,4,$$
$$d(y_{5}, l_{1}) = d(y_{5}, k_{2}), d(y_{5}, l_{2}) = d(y_{5}, k_{1}),$$
the SMT receives the same information from $f_{1}$ and $f_{2}$ and hence they cannot be distinguished (see the picture below).

\vskip0.5cm

\begin{tikzpicture}

\draw[thick,->] (-2,2) -- (6,2);
\draw[thick,->] (2,-2) -- (2,6);

\foreach \Point/\PointLabel in {(0,2)/y_1, (4,2)/y_2, (2,0)/y_3, (2,4)/y_4, (4,4)/y_5}
\draw[fill=black] \Point circle (0.075) node[above right] {$\PointLabel$};

\foreach \Point/\PointLabel in {(6,4)/l_1, (0,6)/l_2, (6,0)/k_1, (4,6)/k_2}
\draw[fill=black] \Point circle (0.001) node[above right] {$\PointLabel$};

\draw[blue][thick,->] (-2,0) -- (6,4);
\draw[red][thick,->] (-2,4) -- (6,0);

\draw[red][thick,->] (0,-2) -- (4,6);
\draw[blue][thick,->] (4,-2) -- (0,6);

\end{tikzpicture}

\vskip0.5cm

The author of this article believes that Theorems 3.1 and 3.2 remain true in the general case where the amplitudes may collide. However, in order to find a procedure to extract the nodes in Theorem 3.1 (or the hyperplanes in Theorem 3.2) one will also have to use the information received from the points, on which the SMT is given, whose distances from some of the nodes $x_{i}, 1\leq i\leq m$ may collide.

\subsection{A Numerical Example}

We finish by giving a numerical example for extracting distributions of the form (3.1) in case where $n = m = 2$. Suppose that
$$f = 3\delta_{x_{1}} + 2\delta_{x_{2}}, \textrm{where}\hskip0.1cm x_{1} = (-1, 0), x_{2} = (1, 0)$$
and that the SMT of $f$ is given at the points
$$y_{1} = (0,0), y_{2} = (0,2), y_{3} = (-1, 1), y_{4} = (1,1), y_{5} = (1,2)$$
(observe that no line passes through more than two of these points). Computing the values $\tau_{i}, i = 0,1,2,3$ for the point $y_{1}$ we obtain $(\tau_{0}, \tau_{1}, \tau_{2}, \tau_{3})_{y_{1}} = (5,5,5,5)$. Similarly, for the rest of the points we have
$$(\tau_{0}, \tau_{1}, \tau_{2}, \tau_{3})_{y_{2}} = (5, 11.18, 25, 55.901),
(\tau_{0}, \tau_{1}, \tau_{2}, \tau_{3})_{y_{3}} = (5, 7.472, 13, 25.36),$$
$$(\tau_{0}, \tau_{1}, \tau_{2}, \tau_{3})_{y_{4}} = (5, 8.708, 17, 35.541),
(\tau_{0}, \tau_{1}, \tau_{2}, \tau_{3})_{y_{5}} = (5, 12.485, 32, 75.882).$$
Building the corresponding equations for the polynomials coefficients we obtain
$$\hskip-9cm\left(\begin{array}{cc}5 & 5 \\ 5 & 5\end{array}\right)\left(\begin{array}{c}c_{0} \\ c_{1}\end{array}\right) = -\left(\begin{array}{c}5 \\ 5\end{array}\right),$$
$$\hskip-6.58cm\left(\begin{array}{cc}5 & 11.18 \\ 11.18 & 25\end{array}\right)\left(\begin{array}{c}c_{0} \\ c_{1}\end{array}\right) = -\left(\begin{array}{c}25 \\ 55.901\end{array}\right),$$
$$\hskip-6.78cm\left(\begin{array}{cc}5 & 7.472 \\ 7.472 & 13\end{array}\right)\left(\begin{array}{c}c_{0} \\ c_{1}\end{array}\right) = -\left(\begin{array}{c}13 \\ 25.36\end{array}\right),$$
$$\hskip-6.58cm\left(\begin{array}{cc}5 & 8.708 \\ 8.708 & 17\end{array}\right)\left(\begin{array}{c}c_{0} \\ c_{1}\end{array}\right) = -\left(\begin{array}{c}17 \\ 35.541\end{array}\right),$$
$$\hskip-6.16cm\left(\begin{array}{cc}5 & 12.485 \\ 12.485 & 32\end{array}\right)\left(\begin{array}{c}c_{0} \\ c_{1}\end{array}\right) = -\left(\begin{array}{c}32 \\ 75.882\end{array}\right).$$
\vskip0.3cm
The first two systems of equations are degenerate as expected since the points $y_{1}$ and $y_{2}$ have equal distances from the points $x_{1}$ and $x_{2}$. The other three systems have a unique solution and thus we obtain the following three corresponding polynomials
$$\hskip-8.5cm P_{y_{3}}(x) = 2.234 - 3.235x + x^{2},$$
$$\hskip-8.5cm P_{y_{4}}(x) = 2.234 - 3.235x + x^{2},$$
\vskip-0.3cm
$$\hskip-8.5cm P_{y_{5}}(x) = 5.656 - 4.828x + x^{2},$$
whose roots are
$$\hskip-3cm \xi_{1,3} = 1, \xi_{2,3} = 2.235, \xi_{1,4} = 1, \xi_{2,4} = 2.235, \xi_{1,5} = 2, \xi_{2,5} = 2.828.$$

Now we can assume without loss of generality that the point $x_{1}$ corresponds to the root $\xi_{1,3}$ and that $x_{2}$ corresponds to $\xi_{2,3}$. Hence, taking the first two equations of the system (4.1) we obtain
$$\left(\begin{array}{cc} 1 & 1 \\ 1 & 2.235\end{array}\right)\left(\begin{array}{c} a_{1} \\ a_{2} \end{array}\right) = \left(\begin{array}{c} 5 \\ 7.472 \end{array}\right).$$
The solution of this system is $a_{1} = 2.998, a_{2} = 2.001$. Hence at this point the amplitudes are (approximately) known. In order to decide how the points $x_{1}$ and $x_{2}$ correspond to the roots $\xi_{1,4} = 1, \xi_{2,4} = 2.235$ we need to check which one of the following systems of equations is valid
$$\left(\begin{array}{cc} 1 & 1 \\ \xi_{1,4} & \xi_{2,4}\end{array}\right)\left(\begin{array}{c} 2.998 \\ 2.001 \end{array}\right) = \left(\begin{array}{c} 5 \\ 8.708 \end{array}\right),
\left(\begin{array}{cc} 1 & 1 \\ \xi_{2,4} & \xi_{1,4}\end{array}\right)\left(\begin{array}{c} 2.998 \\ 2.001 \end{array}\right) = \left(\begin{array}{c} 5 \\ 8.708 \end{array}\right).$$
Since only the second system of equations is valid it follows that $x_{1}$ corresponds to $\xi_{2,4}$ and $x_{2}$ corresponds to $\xi_{1,4}$. Finally, in order to decide how the points $x_{1}$ and $x_{2}$ correspond to the roots $\xi_{1,5} = 2, \xi_{2,5} = 2.828$ we need to check which one of the following systems of equations is valid
$$\left(\begin{array}{cc} 1 & 1 \\ \xi_{1,5} & \xi_{2,5}\end{array}\right)\left(\begin{array}{c} 2.998 \\ 2.001 \end{array}\right) = \left(\begin{array}{c} 5 \\ 12.485 \end{array}\right),
\left(\begin{array}{cc} 1 & 1 \\ \xi_{2,5} & \xi_{1,5}\end{array}\right)\left(\begin{array}{c} 2.998 \\ 2.001 \end{array}\right) = \left(\begin{array}{c} 5 \\ 12.485 \end{array}\right).$$
Since only the second system of equations is valid it follows that $x_{1}$ corresponds to $\xi_{2,5}$ and $x_{2}$ corresponds to $\xi_{1,5}$.

Since $x_{1}$ corresponds to $\xi_{1,3}$, $\xi_{2,4}$ and $\xi_{2,5}$ we have the following system of equations for $x_{1}$
$$(x_{1,1} - y_{3,1})^{2} + (x_{1,2} - y_{3,2})^{2} = \xi_{1,3}^2,$$
$$(x_{1,1} - y_{4,1})^{2} + (x_{1,2} - y_{4,2})^{2} = \xi_{2,4}^2,$$
$$(x_{1,1} - y_{5,1})^{2} + (x_{1,2} - y_{5,2})^{2} = \xi_{2,5}^2$$
or equivalently
$$x_{1,1}^{2} + x_{1,2}^{2} + 2x_{1,1} - 2x_{1,2} = -1, $$
$$x_{1,1}^{2} + x_{1,2}^{2} - 2x_{1,1} - 2x_{1,2} = 2.995, $$
$$x_{1,1}^{2} + x_{1,2}^{2} - 2x_{1,1} - 4x_{1,2} = 2.997.$$

Subtracing the first row from the second and the third we obtain

$$-4x_{1,1} = 3.995,$$
$$-4x_{1,1} - 2x_{1,2} = 3.997$$
and the solution of the last equation is $x_{1,1} = -0.998, x_{1,2} = 0.001$. Thus the approximate value of $x_{1}$ is $(-0.998, 0.001)$. In the same way we can approximate the point $x_{2}$.

\section*{Acknowledgments}

\hskip0.5cm The author would like to thank Professor Yosef Yomdin from Weizmann Institute of Science for his useful comments and suggestions during the writing of this article.

\section{Appendix}

\begin{lem}
Let $g$ be a function, defined on $\Bbb R^{+}$, such that its radial extension belongs to the Schwartz space $\mathrm{S}\left(\Bbb R^{n}\right)$. Then $g$ and its Hankel transform $G$ are continuously differentiable and belong to $L^{1}\left(\Bbb R^{+}, r^{\frac{n - 1}{2}}\right)$.
\end{lem}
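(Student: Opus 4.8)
The plan is to reduce every assertion about $g$ to the restriction of its radial extension $\tilde g(x) := g(|x|)$ to a line through the origin, and every assertion about $G$ to the same statements applied to the Euclidean Fourier transform of $\tilde g$.

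First I would dispose of $g$ itself. Restricting $\tilde g$ to the coordinate line $\Bbb R e_{1}$ produces $\phi(t) := \tilde g(t e_{1})$, which is of class $C^{\infty}$ on all of $\Bbb R$ (since $\tilde g\in\mathrm{S}(\Bbb R^{n})$), is even, and satisfies $\phi(t) = g(|t|)$; hence $g = \phi|_{[0,\infty)}$ is $C^{\infty}$ on $[0,\infty)$, in particular continuously differentiable. For the weighted integrability, the Schwartz decay of $\tilde g$ furnishes, for every $N$, a constant $C_{N}$ with $|g(r)| = |\tilde g(r e_{1})| \leq C_{N}(1 + r)^{-N}$; taking $N > \frac{n - 1}{2} + 1$ makes $\int_{0}^{\infty}|g(r)| r^{\frac{n - 1}{2}}\,dr \leq C_{N}\int_{0}^{\infty}(1 + r)^{-N} r^{\frac{n - 1}{2}}\,dr$ finite (the integrand is $O(r^{\frac{n-1}{2}})$ near $0$ and $O(r^{\frac{n-1}{2}-N})$ near $\infty$). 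This settles the claims for $g$.

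For $G$, the point is that $G$ is, up to a dimensional constant, the radial profile of $\widehat{\tilde g}$. Indeed, since $\tilde g\in L^{1}(\Bbb R^{n})$ one may evaluate its Fourier transform in polar coordinates; using the spherical-average identity quoted in the proof of Theorem 3.3 with its center at the origin (equivalently, the classical formula expressing $\int_{|\theta| = 1}e^{-i\lambda\langle\theta,\omega\rangle}\,d\theta$ as a constant multiple of $j_{\frac{n}{2} - 1}(\lambda|\omega|)$), one gets $\widehat{\tilde g}(\xi) = (2\pi)^{\frac{n}{2}}G(|\xi|)$ with $G$ as in (4.9). Since the Fourier transform maps $\mathrm{S}(\Bbb R^{n})$ to itself and commutes with rotations, $\widehat{\tilde g}$ is again a radial Schwartz function, and its radial profile is therefore — exactly by the argument of the previous paragraph applied to $\widehat{\tilde g}$ in place of $\tilde g$ — smooth on $[0,\infty)$ and rapidly decreasing. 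Consequently $G$ is continuously differentiable and lies in $L^{1}(\Bbb R^{+}, r^{\frac{n - 1}{2}})$, which is the remaining half of the lemma.

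The only genuinely delicate points are bookkeeping ones: fixing the normalization constant in $\widehat{\tilde g} = (2\pi)^{\frac{n}{2}}G(|\cdot|)$, justifying the polar-coordinate computation (a routine Fubini step, legitimate because $\tilde g$ is integrable and the normalized Bessel kernel is bounded), and invoking the classical fact that a function on $[0,\infty)$ whose radial extension lies in $C^{\infty}(\Bbb R^{n})$ is the restriction of a smooth \emph{even} function of one real variable, so that differentiability "up to $r = 0$" is automatic. None of these is a real obstacle; together they are just the standard dictionary between radial Fourier analysis on $\Bbb R^{n}$ and the Hankel transform of order $\frac{n}{2} - 1$.
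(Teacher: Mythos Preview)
Your proposal is correct and follows essentially the same approach as the paper: both arguments establish the claims for $g$ directly from the Schwartz decay and smoothness of its radial extension, then identify the radial extension of $G$ with the Euclidean Fourier transform of $\tilde g$ (via polar coordinates and the Bessel identity), invoke that the Fourier transform preserves $\mathrm{S}(\Bbb R^{n})$, and repeat the first step. The only cosmetic difference is that the paper verifies $g\in L^{1}(\Bbb R^{+},r^{\frac{n-1}{2}})$ by rewriting the weighted radial integral as $\int_{|x|>1}g_{0}(x)|x|^{-\frac{n-1}{2}}dx$, whereas you use the pointwise Schwartz bound $|g(r)|\le C_{N}(1+r)^{-N}$ directly; these are interchangeable.
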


\begin{proof}

Denote by $g_{0}$ and $G_{0}$ respectively the radial extensions of $g$ and $G$ to $\Bbb R^{n}$. Using a spherical coordinates system in $\Bbb R^{n}$, we have
$$\int_{|x| > 1}g_{0}(x)|x|^{-\frac{n - 1}{2}}dx = \int_{1}^{\infty}\int_{\Bbb S^{n - 1}}g_{0}(r\theta)r^{\frac{n - 1}{2}}d\theta dr = \frac{2\pi^{\frac{n}{2}}}{\Gamma\left(\frac{n}{2}\right)}\int_{1}^{\infty}g(r)r^{\frac{n - 1}{2}}dr.$$
Since $g_{0}$ belongs to $\mathrm{S}\left(\Bbb R^{n}\right)$ it follows that the integral in the left hand side converges and hence also the integral in the right hand side. Since $g$ is also bounded on the interval $[0,1]$ (since $g_{0}$ is bounded in the unit disk), it follows that $g$ is in $L^{1}\left(\Bbb R^{+}, r^{\frac{n - 1}{2}}dr\right)$. Denote by $\widehat{g}_{0}$ the Fourier transform of $g$, then using spherical coordinates again we have
$$\widehat{g}_{0}(\omega) = \frac{1}{(2\pi)^{\frac{n}{2}}}\int_{\Bbb R^{n}}g_{0}(x)e^{-i\langle \omega, x\rangle}dx = \frac{1}{(2\pi)^{\frac{n}{2}}}\int_{0}^{\infty}\int_{\Bbb S^{n - 1}}g_{0}(r\theta)e^{-ir\langle \omega, \theta\rangle}d\theta r^{n - 1}dr.$$
Denoting $\omega = \lambda\psi$, where $\lambda = |\omega|, \psi = \frac{\omega}{|\omega|}$
and using the identity
$$\int_{\Bbb S^{n - 1}}e^{-it\left\langle \psi, \theta\right\rangle} d\theta = (2\pi)^{\frac{n}{2}}j_{\frac{n}{2} - 1}(t)$$
it follows that
$$\widehat{g}_{0}(\lambda\psi) = \int_{0}^{\infty}g(r)j_{\frac{n}{2} - 1}(\lambda r)r^{n - 1}dr = G(\lambda).$$
Hence $\widehat{g}_{0}$ is the radial extension of $G$. Since the Fourier transform maps the Schwartz space $\mathrm{S}\left(\Bbb R^{n}\right)$ onto itself and since $g_{0}$ is in $\mathrm{S}\left(\Bbb R^{n}\right)$, it follows that the same is true for the radial extension of $G$. Hence, from the same arguments
we used for $g$ and its radial extension, it follows that $G$ belongs to $L^{1}\left(\Bbb R^{+}, r^{\frac{n - 1}{2}}dr\right)$.

The assertion that $g$ is continuously differentiable follows immediately from the fact that its radial extension if infinitely differentiable and the same is true for $G$.
\end{proof}

\begin{lem}

Let $n\geq 2$ and $\lambda_{1},...,\lambda_{n}$ be $n$ real numbers satisfying $\lambda_{i}\neq\lambda_{j}, i\neq j$. Let $\sigma:\{1,2,...,n\}\rightarrow\{1,2,...,n\}$ be a permutation which is different from the identity. Define the following matrix

\begin{equation}
    M =  \left(
            \begin{array}{cccc}
                \lambda_{1} - \lambda_{\sigma(1)} & \lambda_{2} - \lambda_{\sigma(2)} & ... & \lambda_{n} - \lambda_{\sigma(n)}\\
                \lambda_{1}^{2} - \lambda_{\sigma(1)}^{2} & \lambda_{2}^{2} - \lambda_{\sigma(2)}^{2} & ... & \lambda_{n}^{2} - \lambda_{\sigma(n)}^{2}\\
                .................\\
                \lambda_{1}^{n} - \lambda_{\sigma(1)}^{n} & \lambda_{2}^{n} - \lambda_{\sigma(2)}^{n} & ... & \lambda_{n}^{n} - \lambda_{\sigma(n)}^{n}
            \end{array}
    \right).
\end{equation}
Then, if $v = (v_{1},...,v_{n})\in\Bbb R^{n}$ satisfies $Mv^{T} = 0$ then there are two indices $i$ and $j$, $i\neq j$ such that $v_{i} = v_{j}$.
\end{lem}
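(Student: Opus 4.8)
The plan is to reinterpret the equation $Mv^{T}=0$ as a linear dependence among a family of Vandermonde-type vectors. Write $w(t)=(t,t^{2},\ldots,t^{n})^{T}\in\R^{n}$, so that the $i$-th column of $M$ is exactly $w(\lambda_{i})-w(\lambda_{\sigma(i)})$. Then $Mv^{T}=\sum_{i=1}^{n}v_{i}\bigl(w(\lambda_{i})-w(\lambda_{\sigma(i)})\bigr)$, and reindexing the second sum by $j=\sigma(i)$ (so $i=\sigma^{-1}(j)$) turns the condition $Mv^{T}=0$ into $\sum_{j=1}^{n}\bigl(v_{j}-v_{\sigma^{-1}(j)}\bigr)w(\lambda_{j})=0$.

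Next I would record the linear independence we need. Since $w(t)=t\cdot(1,t,\ldots,t^{n-1})^{T}$ and the vectors $(1,\lambda_{j},\ldots,\lambda_{j}^{n-1})^{T}$, $j=1,\ldots,n$, are the columns of an invertible Vandermonde matrix (the $\lambda_{j}$ being distinct), the vectors $\{w(\lambda_{j}):\lambda_{j}\neq 0\}$ are linearly independent; moreover $w(\lambda_{j})=0$ precisely when $\lambda_{j}=0$, and by distinctness at most one index has $\lambda_{j}=0$. Feeding this into the displayed relation, the coefficient of $w(\lambda_{j})$ must vanish for every $j$ with $\lambda_{j}\neq 0$; that is, $v_{j}=v_{\sigma^{-1}(j)}$ for all such $j$.

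It then remains the purely combinatorial step of producing two distinct indices $i\neq j$ with $v_{i}=v_{j}$. If no $\lambda_{j}$ vanishes, then $v$ is constant on every cycle of $\sigma$; since $\sigma\neq\mathrm{id}$ some cycle has length $\geq 2$, and any two indices in it do the job. If exactly one index $i_{0}$ has $\lambda_{i_{0}}=0$, the relation $v_{j}=v_{\sigma^{-1}(j)}$ is only guaranteed for $j\neq i_{0}$. In the subcase $\sigma(i_{0})\neq i_{0}$, apply it with $j=\sigma(i_{0})$, which is $\neq i_{0}$ and has $\sigma^{-1}(j)=i_{0}$, to get $v_{\sigma(i_{0})}=v_{i_{0}}$. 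In the subcase $\sigma(i_{0})=i_{0}$, the map $\sigma$ restricts to a non-identity permutation of $\{1,\ldots,n\}\setminus\{i_{0}\}$ (else $\sigma=\mathrm{id}$), and $v$ is constant on its cycles, one of which again has length $\geq 2$.

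I expect the only real subtlety—the main obstacle—to be exactly this bookkeeping around a possible zero among the $\lambda_{j}$: there $w(\lambda_{i_{0}})$ drops out of the relation, so one loses the equation $v_{i_{0}}=v_{\sigma^{-1}(i_{0})}$ and must argue separately, as above, that the surviving equations still force a coincidence among the components of $v$. This case genuinely arises in the application in the proof of Theorem 3.1, where $\lambda_{i}=|y-x_{i}|$ may be zero. Everything else reduces to the standard nonvanishing of a Vandermonde determinant and an elementary cycle argument for permutations.
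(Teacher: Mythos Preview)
Your proof is correct and considerably more direct than the paper's. Both arguments introduce the vectors $\rho_{i}=w(\lambda_{i})=(\lambda_{i},\lambda_{i}^{2},\ldots,\lambda_{i}^{n})^{T}$ and rely on Vandermonde independence, but the paper organizes everything as an induction on $n$: it first strips off any fixed point of $\sigma$, then decomposes $\{1,\ldots,n\}$ into the cycles $A_{1},\ldots,A_{k}$ of $\sigma$ and treats the single-cycle case ($k=1$) and the multi-cycle case ($k\geq 2$) separately---the former by an iterative elimination argument showing the kernel is spanned by $(1,\ldots,1)$, the latter by restricting to one cycle and invoking the inductive hypothesis. Your global reindexing $\sum_{i}v_{i}\,w(\lambda_{\sigma(i)})=\sum_{j}v_{\sigma^{-1}(j)}\,w(\lambda_{j})$ collapses all of this: it yields $v_{j}=v_{\sigma^{-1}(j)}$ for every $j$ with $\lambda_{j}\neq 0$ in one stroke, after which the residual combinatorics (at most one vanishing $\lambda_{i_{0}}$, possibly fixed by $\sigma$) is a two-line case check. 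The paper does reach the same local identity $u_{i}=u_{\sigma^{-1}(i)}$ on a single cycle (its equation (5.7)), but then returns to induction rather than observing, as you do, that the reindexing already works for all indices simultaneously. The extra information the paper extracts---that in the single-cycle, all-nonzero case the kernel is exactly $\mathrm{span}\{(1,\ldots,1)\}$---is not needed for the lemma and is in any case immediate from your relations as well.
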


\begin{proof}
The proof is by induction on $n\geq 2$. Since $\sigma$ is different from the identity then for $n = 2$ we have $\sigma(1) = 2$ and $\sigma(2) = 1$. In this case the vector $v = (1,1)$ is in the kernel of the matrix $M$ as defined by (5.1) and indeed this vector has two equal components with different indices. There are no other independent vectors in the kernel of $M$ since otherwise $M = 0$ which will imply that $\lambda_{1} = \lambda_{2}$.

Now we assume the induction hypothesis for every integer $m$ satisfying $2\leq m \leq n - 1$ and our aim is to prove it for the integer $n$ where we assume that $n\geq 3$.

First observe that we can assume that $\sigma$ does not have any fixed points. Indeed, suppose without loss of generality that $\sigma(n) = n$, then the matrix $M$ has the following form
$$ M =  \left(
            \begin{array}{ccccc}
                \lambda_{1} - \lambda_{\sigma(1)} & \lambda_{2} - \lambda_{\sigma(2)} & ... & \lambda_{n - 1} - \lambda_{\sigma(n - 1)} & 0\\
                \lambda_{1}^{2} - \lambda_{\sigma(1)}^{2} & \lambda_{2}^{2} - \lambda_{\sigma(2)}^{2} & ... & \lambda_{n - 1}^{2} - \lambda_{\sigma(n - 1)}^{2} & 0\\
                .................\\
                \lambda_{1}^{n - 1} - \lambda_{\sigma(1)}^{n - 1} & \lambda_{2}^{n - 1} - \lambda_{\sigma(2)}^{n - 1} & ... & \lambda_{n - 1}^{n - 1} - \lambda_{\sigma(n - 1)}^{n - 1} & 0\\
                \lambda_{1}^{n} - \lambda_{\sigma(1)}^{n} & \lambda_{2}^{n} - \lambda_{\sigma(2)}^{n} & ... & \lambda_{n - 1}^{n} - \lambda_{\sigma(n - 1)}^{n} & 0
            \end{array}
    \right) =
    \left(
        \begin{array}{cc}
            A & \overline{0}\\
            * & 0
        \end{array}
    \right),$$
where $A$ is the $(n - 1)\times(n - 1)$ top left sub matrix of $M$. Hence, if $Mv^{T} = 0$ then $v$ has the form $v = (v_{0}, \ast)$ where $v_{0}\in\Bbb R^{n - 1}$ is in the kernel of $A$. Now we can use our induction hypothesis on the matrix $A$. Indeed, if $\widetilde{\sigma}$ denotes the restriction of $\sigma$ to the set $\{1,2,...,n - 1\}$ then since $\sigma(n) = n$ it follows that $\widetilde{\sigma}(\{1,...,n - 1\}) = \{1,...,n - 1\}$ and since $\sigma$ is different from the identity then the same is true for $\widetilde{\sigma}$. Hence, using the induction hypothesis on the matrix $A$ is follows that $v_{0}$ has at least two equal components with different indices and thus the same is true for $v$.

Now, with the assumption that $\sigma$ does not have any fixed points it follows that the set $\{1,2,...,n\}$ has a partition into mutually disjoint sets $A_{1},...,A_{k}$ which satisfy the following conditions:\\

(i) $\left|A_{i}\right|\geq 2, 1\leq i\leq k$.

\vskip0.3cm

(ii) $\sigma\left(A_{i}\right) = A_{i}, 1\leq i\leq k$.

\vskip0.3cm

(iii) For every $1\leq i\leq k$ and every non-empty subset $B\subsetneqq A_{i}$ we have $\sigma(B)\neq B$.\\

We will distinguish between the cases when $k = 1$ and $k \geq 2$.

\subsection{The Case $k = 1$:}

If $k = 1$ we will show that the kernel of $M$ is spanned by the vector $v = (1,1,...,1)$. Indeed, suppose that $u = (u_{1},...,u_{n})$ is in the kernel of $M$, we can assume without loss of generality that $u_{n} = 0$ (since otherwise we can replace $u$ with $u - u_{n}v$). We will show that $u = \overline{0}$. Since $u$ is in the kernel of $M$ we have
$$ Mu =  \left(
            \begin{array}{ccccc}
                \lambda_{1} - \lambda_{\sigma(1)} & ... & \lambda_{n - 2} - \lambda_{\sigma(n - 2)} & \lambda_{n - 1} - \lambda_{\sigma(n - 1)} & \lambda_{n} - \lambda_{\sigma(n)}\\
                \lambda_{1}^{2} - \lambda_{\sigma(1)}^{2} & ... & \lambda_{n - 2}^{2} - \lambda_{\sigma(n - 2)}^{2} & \lambda_{n - 1}^{2} - \lambda_{\sigma(n - 1)}^{2} & \lambda_{n}^{2} - \lambda_{\sigma(n)}^{2}\\
                .................\\
                \lambda_{1}^{n} - \lambda_{\sigma(1)}^{n} & ... & \lambda_{n - 2}^{n} - \lambda_{\sigma(n - 2)}^{n} & \lambda_{n - 1}^{n} - \lambda_{\sigma(n - 1)}^{n} & \lambda_{n}^{n} - \lambda_{\sigma(n)}^{n}
            \end{array}
    \right)
    \left(
        \begin{array}{c}
        u_{1}\\
        ...\\
        u_{n - 2}\\
        u_{n - 1}\\
        0
        \end{array}
    \right)
$$
\begin{equation}
= \left(\rho_{1} - \rho_{\sigma(1)}\right)u_{1} + ... + \left(\rho_{n - 2} - \rho_{\sigma(n - 2)}\right)u_{n - 2} + \left(\rho_{n - 1} - \rho_{\sigma(n - 1)}\right)u_{n - 1} = \overline{0}
\end{equation}
where
\begin{equation}
 \rho_{i} = \left(\lambda_{i},\lambda_{i}^{2}...,\lambda_{i}^{n}\right)^{T}, 1 \leq i \leq n.
\end{equation}
\vskip0.2cm

For the case $k = 1$ condition (iii) implies that $\sigma\left(\{1,2,...,n - 1\}\right) \neq \{1,2,...,n - 1\}$. Hence, we can assume without loss of generality that $\sigma(k)\neq n - 1, 1 \leq k \leq n - 1$. Hence the vectors $\rho_{n - 1}$ and $\rho_{n}$ appear only once in the brackets of equation (5.2) and thus from equation (5.2) we have
\begin{equation}
a_{1}\rho_{1} + ... + a_{n - 2}\rho_{n - 2} + u_{n - 1}\rho_{n - 1} - u_{\sigma^{- 1}(n)}\rho_{n} = \overline{0}
\end{equation}
where $a_{1},...,a_{n - 2}$ are constants which depend on $u_{1},...,u_{n - 1}$. Now observe that if non of the vectors $\rho_{1},...,\rho_{n}$ is equal to zero then by equation (5.3) it follows that they are linearly independent since they form the $n$ columns of an $n\times n$ Vandermonde matrix and by our assumption $\lambda_{i}\neq\lambda_{j}$ if $i\neq j$. If one of these vectors is equal to zero then the other are different from zero and thus they are linearly independent since they form a sub matrix of the Vandermonde matrix. Hence since at least one of the vectors $\rho_{n - 1}$ or $\rho_{n}$ is different from zero it follows from equation (5.4) that either $u_{n - 1} = 0$ or $u_{\sigma^{-1}(n)} = 0$.

Continuing in this way, assume that after $n - k - 1$ steps we can prove that $n - k - 1$ coefficients from the set $\left\{u_{1},...,u_{n - 1}\right\}$ are equal to zero. Then from equation (5.2) we will have
\begin{equation}
\left(\rho_{i_{1}} - \rho_{\sigma(i_{1})}\right)u_{i_{1}} + ... + \left(\rho_{i_{k - 1}} - \rho_{\sigma(i_{k - 1})}\right)u_{i_{k - 1}} + \left(\rho_{i_{k}} - \rho_{\sigma(i_{k})}\right)u_{i_{k}} = \overline{0}.
\end{equation}
Since $\sigma\left(\left\{i_{1},...,i_{k}\right\}\right) \neq \left\{i_{1},...,i_{k}\right\}$ we can assume, without loss of generality, that $\sigma(i_{p})\neq i_{k}, 1\leq p\leq k$. Also, there exists an index $l$, different from any of the indices $i_{1},...,i_{k}$, such that $l\in\sigma\left(\left\{i_{1},...,i_{k}\right\}\right)$. Hence the vectors $\rho_{i_{k}}$ and $\rho_{l}$ appear only once in the brackets of equation (5.5) and thus from equation (5.5) we have
$$b_{1}\rho_{1} + ... + b_{i_{k} - 1}\rho_{i_{k} - 1} + u_{i_{k}}\rho_{i_{k}} + b_{i_{k} + 1}\rho_{i_{k + 1}} + ... + b_{l - 1}\rho_{l - 1} - u_{\sigma^{-1}(l)}\rho_{l} + b_{l + 1}\rho_{l + 1} + ... + b_{n}\rho_{n} = \overline{0}$$
where $b_{j}, 1\leq j\leq n, j\neq i_{k}, j\neq\sigma^{-1}(l)$ are constants which depend on $u_{i_{1}},..., u_{i_{k}}$. Since at least one of the vectors $\rho_{l}$ or $\rho_{i_{k}}$ is different from zero we can conclude, exactly as before, that either $u_{i_{k}} = 0$ or $u_{\sigma^{-1}(l)} = 0$.

Hence after $n - 1$ steps we can conclude that $u_{1} = 0,...,u_{n - 1} = 0$.

\subsection{The Case $k \geq 2$:}

For the case $k\geq 2$ the set $\{1,2,...,n\}$ has a partition into disjoints subsets $A_{1},...,A_{k}$ which satisfy conditions (i), (ii) and (iii). Since at most one of the vectors $\rho_{1},...,\rho_{n}$ is equal to zero and $k\geq2$ we can assume, without loss of generality, that all the vectors with indices in the set $A_{1}$ are different from zero and again without loss of generality we can assume that
$$A_{1} = \{1,...,m\}, 2\leq m\leq n - 2.$$
Now assume again that $u = (u_{1},...,u_{n})$ is in the kernel of $M$, we will show that there are two different indices, $i$ and $j$, such that $u_{i} = u_{j}$. From the definition of the matrix $M$ it follows that
$$\left(\rho_{1} - \rho_{\sigma(1)}\right)u_{1} + ... + \left(\rho_{m} - \rho_{\sigma(m)}\right)u_{m} + \left(\rho_{m + 1} - \rho_{\sigma(m + 1)}\right)u_{m + 1} + ... + \left(\rho_{n} - \rho_{\sigma(n)}\right)u_{n} =\overline{0}.$$
The last equation can be rewritten as
\begin{equation}
(u_{1} - u_{\sigma^{-1}(1)})\rho_{1} + ... + (u_{m} - u_{\sigma^{-1}(m)})\rho_{m} + c_{m + 1}\rho_{m + 1} + ... + c_{n}\rho_{n} = \overline{0}
\end{equation}
where $c_{m + 1},...,c_{n}$ are constants which depend on $u_{m + 1},...,u_{n}$. Since, by our assumption, all the vectors $\rho_{1},...,\rho_{m}$ are different from zero, they form a sub matrix of the Vandermonde matrix which implies that they are linearly independent. Hence from equation (5.6) we have
\begin{equation}
u_{1} - u_{\sigma^{-1}(1)} = 0,..., u_{m} - u_{\sigma^{-1}(m)} = 0.
\end{equation}
We will now show that the vector $\widetilde{u} = (u_{1},...,u_{m},0,...,0)$ is also in the kernel of $M$. Indeed, from the definition of $M$ we have to prove that
$$ \left(\rho_{1} - \rho_{\sigma(1)}\right)u_{1} + ... + \left(\rho_{m} - \rho_{\sigma(m)}\right)u_{m}$$ $$ =
\rho_{1}u_{1} + ... + \rho_{m}u_{m} - \underset{(\ast)}{\underbrace{\rho_{\sigma(1)}u_{1} - ... - \rho_{\sigma(m)}u_{m}}} = \overline{0}.$$
Now by condition (ii) it follows that $\sigma\left(\{1,...,m\}\right) = \{1,...,m\}$ and thus we also have
$\sigma^{-1}\left(\{1,...,m\}\right) = \{1,...,m\}$. Thus the set $\{1,...,m\}$ is a partition of $\{\sigma^{-1}(1),...,\sigma^{-1}(m)\}$. Using this observation on $(\ast)$ we have
$$\rho_{1}u_{1} + ... + \rho_{m}u_{m} - \rho_{\sigma(1)}u_{1} - ... - \rho_{\sigma(m)}u_{m}$$ $$ =
\rho_{1}u_{1} + ... + \rho_{m}u_{m} - \rho_{\sigma(\sigma^{-1}(1))}u_{\sigma^{-1}(1)} - ... - \rho_{\sigma(\sigma^{-1}(m))}u_{\sigma^{-1}(m)}$$
$$ = \rho_{1}u_{1} + ... + \rho_{m}u_{m} - \rho_{1}u_{\sigma^{-1}(1)} - ... - \rho_{m}u_{\sigma^{-1}(m)}$$
$$ = \left(u_{1} - u_{\sigma^{-1}(1)}\right)\rho_{1} + ... + \left(u_{m} - u_{\sigma^{-1}(m)}\right)\rho_{m}$$
and the last expression is equal to zero by (5.7).

Since $\widetilde{u}$ is in the kernel of $M$ it follows that the vector $(u_{1},...,u_{m})$ is in the kernel of the top left $m\times m$ sub matrix of $M$. That is

\begin{equation}
    \left(
            \begin{array}{ccc}
                \lambda_{1} - \lambda_{\sigma(1)} & ... & \lambda_{m} - \lambda_{\sigma(m)}\\
                .................\\
                \lambda_{1}^{m} - \lambda_{\sigma(1)}^{m} & ... & \lambda_{m}^{m} - \lambda_{\sigma(m)}^{m}
            \end{array}
    \right)
    \left(
        \begin{array}{c}
            u_{1}\\
            ...\\
            u_{m}
        \end{array}
    \right)
     =
     \left(
        \begin{array}{c}
            0\\
            ...\\
            0
        \end{array}
    \right).
\end{equation}
Now we can use the induction hypothesis on equation (5.8). Indeed, since $\sigma\left(\{1,...,m\}\right) = \{1,...,m\}$ and $\lambda_{i}\neq\lambda_{j}$ for $1\leq i,j\leq m, i\neq j$, all of the conditions for the sub matrix in the left hand side of equation (5.8) are satisfied. Hence we conclude that there are two different indices, $i$ and $j$ with $1\leq i,j\leq m$, such that $u_{i} = u_{j}$. This in particular implies that the vector $u$ has two equal components with different indices. Thus the case $k\geq 2$ is also proved.

\end{proof}

\end{document}